\newcommand{\wto}{\rightharpoonup}
\newcommand{\Rn}[1]{\expandafter{\romannumeral #1\relax}}
\newcommand{\RN}[1]{\uppercase\expandafter{\romannumeral #1\relax}}
\newtheorem{theorem}{Theorem}[section]
\newtheorem{remark}{Remark}[section]
\newtheorem{lemma}{Lemma}[section]
\newtheorem{pro}[lemma]{Proposition}
\newtheorem{coro}[lemma]{Corollary}
\newtheorem{assumption}{Assumption}[section]
\def\N{\mathbb{N}}
\def\R{\mathbb{R}}
\def\mbN{\mathbb{N}}
\def\mbR{\mathbb{R}}
\def\e{\varepsilon}
\def\bv{\mathbf{v}}
\def\bn{\mathbf{n}}
\def\mcH{\mathcal{H}}
\def\mcX{\mathcal{X}}
\def\Div{{\rm div}}
\renewcommand{\S}{\mathbb{S}}
\def\XXint#1#2#3{{\setbox0=\hbox{$#1{#2#3}{\int}$ }
		\vcenter{\hbox{$#2#3$ }}\kern-.6\wd0}}
\subjclass{35Q31, 35Q35, 35J61, 76B03}
\keywords{Steady Euler flow, semilinear elliptic equations,  least total curvature flows, heteroclinic solutions, nonconvex superlevel sets}
\begin{document}

\title[Steady Euler flow with least total curvature]{Least total curvature solutions to steady Euler system and monotone solutions to semilinear equations in a strip}

\author{Changfeng Gui}
\address{Department of Mathematics, Faculty of Science and Technology, University of Macau, Taipa, Macao.}
\email{changfenggui@um.edu.mo}

\author{David Ruiz}
\address{IMAG, Departamento de An\'alisis Matem\'atico, Universidad de Granada, 18071 Granada, Spain}
\email{daruiz@ugr.es}

\author{Chunjing Xie}
\address{School of Mathematical Sciences, Institute of Natural Sciences, Ministry of Education Key Laboratory of Scientific and Engineering Computing, CMA-Shanghai, Shanghai Jiao Tong University, Shanghai 200240, China.}
\email{cjxie@sjtu.edu.cn}

\author{Huan Xu}
\address{Department of Mathematics, Faculty of Science and Technology, University of Macau, Taipa, Macao.}
\email{hzx0016@auburn.edu}

\subjclass[2020]{35Q31, 35Q35, 35J61, 76B03}
\keywords{Steady Euler flow, semilinear elliptic equations,  least total curvature flows, heteroclinic solutions, nonconvex superlevel sets}

\date{}

\maketitle

\begin{abstract}
This paper focuses on establishing the existence of a class of steady solutions, termed {\it least total curvature solutions}, to the incompressible Euler system in a strip. The solutions obtained in this paper complement the 
{\it least total curvature solutions} already known. Our approach employs a minimization procedure to identify a monotone heteroclinic solution for a conveniently chosen semilinear elliptic PDE. This method also enables us to construct positive and monotone (and consequently stable) solutions to semilinear elliptic PDEs with non-convex superlevel sets in a strip domain. This can be regarded as a negative answer to a generalized problem raised in \cite{HNS16}. 
\end{abstract}



\section{Introduction}

In this paper, our main concern is steady solutions to the incompressible Euler system in a domain $\Omega\subset\mbR^2$
\begin{eqnarray}\label{Euler}
\left\{\begin{aligned}
&\bv\cdot\nabla\bv+\nabla P=0, & {\rm in}\ \Omega,\\
&\Div \, \bv=0, & {\rm in}\ \Omega.
\end{aligned}\right.
\end{eqnarray}
Here $\bv=(v_1,v_2)$ is the velocity field and $P$ is the pressure. The flow is supplemented with the {\it slip boundary condition}
\begin{align}\label{slip boundary condition}
\bv\cdot\bn=0\ \ {\rm on}\ \partial\Omega,
\end{align}
where $\bn$ denotes the unit outer normal to $\partial\Omega$.

It is well-known that both parallel shear flows (or simply shear flows) and circular flows are solutions to \eqref{Euler}. Here, a shear flow has the form $\bv=(v_1(x_2),0)$ up to rotation, while a circular flow takes the form $\bv=V(|x|)\frac{(-x_2,x_1)}{|x|}$ for some function $V$. 

 In recent years, there has been a growing interest in the rigidity issue for steady Euler flows in domains with symmetry. The question within this framework asks whether solutions inherit the symmetries of the domain. 
Regarding radial symmetry, it was proved in \cite{hamel2021circular} that sufficiently regular steady Euler solutions with non-vanishing velocity on an annulus must be circular.
Another notable recent result in \cite{gomez2021symmetry} establishes the radial symmetry of steady solutions characterized by compactly supported, non-negative vorticity. Radial symmetry of compactly supported velocities has been established by \cite{RuizARMA} under some assumptions on the stagnation set: these assumptions are necessary by the flexibility result of \cite{EFR}. A recent result in \cite{EHSX} demonstrates that analytic flows in simply connected analytic domains must be circular if their stream functions cannot be described by a global semilinear elliptic equation.

 In this paper, we are interested in steady flows in domains that are invariant in one direction, where the symmetric solutions are shear flows. 
In \cite{hamel2017shear,hamel2019liouville}, Hamel and Nadirashvili establish a number of rigidity results on the strip, the half-plane, and the entire plane by assuming essentially that the velocity field does not vanish on the domain. 
The rigidity of certain special shear flows is investigated in \cite{coti2023stationary,lin2011inviscid,GXX_arxiv_2024}, addressing the existence and non-existence of non-shear structures near these given flows.
See also the work in \cite{Li2022Poiseuille} on the rigidity of Poiseuille flows in a strip.

Further results on flexibility and rigidity in various contexts can be found in \cite{Cao, CastroLear1,CastroLear2, constantin2021flexibility, drivas2023islands, drivas-nualart, nualart2023zonal}.

Adopting a novel global perspective, the work \cite{GXX_arxiv_2024} provides a complete classification of bounded steady Euler flows in the entire plane \(\mathbb{R}^2\), the half-plane \(\mathbb{R}^2_+ := \mathbb{R} \times (0, \infty)\), and the infinitely long strip \(\Omega_\infty := \mathbb{R} \times (0, 1)\)—based on the flow angle structure. For flows in the half-plane or strip, the key findings of \cite{GXX_arxiv_2024} are summarized below.

\begin{theorem}[\hspace{1sp}\cite{GXX_arxiv_2024}]\label{GXX_classification}
Let $\Omega=\Omega_\infty$ or $\mbR_+^2$. Suppose that $\bv\in C^2(\Bar{\Omega})$ is a bounded solution of \eqref{Euler} satisfying \eqref{slip boundary condition}. Then exactly one of the following situations holds.
\begin{enumerate}[label=(\Roman*)]
\item $\bv$ is a parallel shear flow, that is, $\bv\equiv (v_1(x_2),0)$;

\item $\Theta(\bv;\Bar{\Omega})\vcentcolon=
\left\{\frac{\bv(x)}{|\bv(x)|}:\  x\in \Bar{\Omega}, \ |\bv(x)|>0\right\}=\S^1$ (unit circle);

\item $\Theta(\bv;\Bar{\Omega})=\S_+^1=\{(\cos \theta, \sin \theta):\theta\in [0, \pi]\}$ (upper closed semicircle), or $\Theta(\bv;\Bar{\Omega})=\S_-^1=\{(\cos \theta, \sin \theta):\theta\in [ \pi, 2\pi]\}$ (lower closed semicircle).
\end{enumerate}
\end{theorem}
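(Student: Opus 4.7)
The plan is to base the classification on a stream-function reformulation and then apply the maximum principle to a single tangential derivative. Since both $\Omega_\infty$ and $\mbR_+^2$ are simply connected, the constraint $\Div\,\bv=0$ furnishes a stream function $\psi\in C^3(\bar\Omega)$ with $\bv=\nabla^\perp\psi=(\partial_{x_2}\psi,-\partial_{x_1}\psi)$, and the slip condition \eqref{slip boundary condition} makes $\psi$ locally constant on each connected component of $\partial\Omega$. Combining the momentum equation with Bernoulli's law, one checks that on every connected component of the open set $U:=\{|\bv|>0\}$, $\psi$ satisfies a semilinear equation $\Delta\psi=F(\psi)$ for the corresponding Bernoulli derivative $F$. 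On $U$ the unit vector $\bv/|\bv|$ is continuous, and the slip condition forces it to be locally constant with values in $\{(1,0),(-1,0)\}$ on $U\cap\partial\Omega$; this is the common setup.

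The trichotomy is then extracted by asking whether $\Theta$ is contained in some closed half-plane of $\mbR^2$. If not, $\Theta$ meets every closed half-plane, and the constraints on boundary traces together with the connectedness of the image of each component of $U$ force $\Theta=\S^1$, which is case (II). Suppose instead that $\bv\cdot\bu_0\geq 0$ on $\bar\Omega$ for some unit vector $\bu_0$. Set $w:=\bv\cdot\bu_0=\bu_0^\perp\cdot\nabla\psi\geq 0$; differentiating $\Delta\psi=F(\psi)$ along $\bu_0^\perp$ yields the linear equation $\Delta w=F'(\psi)w$ on each component of $U$, to which the strong maximum principle applies and produces the dichotomy $w\equiv 0$ or $w>0$ in the interior. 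In the first case $\bv$ is everywhere parallel to $\bu_0^\perp$, and combined with the boundary trace in $\{(1,0),(-1,0)\}$ this forces $\bu_0^\perp=\pm(1,0)$, hence $v_2\equiv 0$ and the shear structure $\bv=(v_1(x_2),0)$, giving case (I). In the strict-positivity case, the boundary trace forces both $(1,0)$ and $(-1,0)$ to lie in $\Theta$, which in turn pins $\bu_0=(0,\pm 1)$ and so $\Theta\subseteq\S_\pm^1$; the opposite inclusion $\Theta=\S_\pm^1$ is then obtained by a streamline/continuity argument together with the unboundedness of $\Omega$, yielding case (III).

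The main obstacle I expect is the treatment of the stagnation set, since $U$ may be disconnected and the nonlinearity $F$ may depend on the component, so the linear equation for $w$ is only available componentwise; lifting the argument $\theta$ consistently across $\{\bv=0\}$ to justify the existence of the separating vector $\bu_0$ from the assumption $\Theta\neq\S^1$ requires care, especially near parts of the stagnation set where $\bv$ vanishes to high order or on a positive-measure set. A secondary difficulty is ruling out the intermediate scenario in which $\Theta$ is a proper arc that is neither a pair of points nor a closed semicircle; this configuration must be excluded for non-shear flows, and its exclusion appears to require the combined use of the maximum principle, the two-sided slip structure of the strip (or the one-sided one on $\mbR_+^2$ together with boundedness at infinity), and an ODE analysis along streamlines.
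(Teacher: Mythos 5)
The statement in question is a \emph{cited} theorem; the paper does not prove it, but does describe the approach of \cite{GXX_arxiv_2024}: a global integral estimate on the total curvature $\int_\Omega |v_1\nabla v_2 - v_2\nabla v_1|^2/|\bv|^2\,dx$, exploiting the divergence-form vorticity identity $\bv\cdot\nabla\omega = \Div(v_1\nabla v_2 - v_2\nabla v_1)=0$, and then a sharp lower-bound inequality whose non-shear extremizers are exactly the type-(III) flows. Your proposal is a genuinely different route: a stream-function/Bernoulli reformulation followed by a strong maximum principle for the directional derivative $w=\bv\cdot\bu_0$. The two approaches have opposite strengths: the total-curvature approach is inherently global and is insensitive to the topology of the stagnation set, whereas your componentwise maximum-principle argument is more elementary in flavor but must cope with disconnected $U$, possible nonsmoothness of $F$, and consistency of the argument angle across $\{\bv=0\}$, all of which you correctly flag as obstacles.

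The real gap, however, is earlier than the obstacles you list: the dichotomy you use to launch the argument is not justified. You split on whether $\Theta$ lies in a closed half-plane, and when it does not you assert that ``the constraints on boundary traces together with the connectedness of the image of each component of $U$ force $\Theta=\S^1$.'' This is exactly the content of the theorem and is not a soft fact. Even when $U$ is connected (so $\Theta$ is an arc), an arc of length strictly between $\pi$ and $2\pi$ is not contained in any closed half-plane, and nothing in your sketch rules it out. Symmetrically, in the half-plane case you need to eliminate arcs of length strictly less than $\pi$ that contain only one of $\pm(1,0)$, and your appeal to ``boundary trace forces both $(1,0)$ and $(-1,0)$ to lie in $\Theta$'' is circular: on $\mbR^2_+$ the boundary is connected, so continuity of $\bv/|\bv|$ on $U\cap\partial\Omega$ gives only one of the two directions unless $v_1(\cdot,0)$ changes sign, which is part of what must be proved (compare Proposition \ref{search_guide}(i)). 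Finally, the linearization step $\Delta w = F'(\psi)w$ presupposes $F\in C^1$; for steady Euler the vorticity profile $F$ constructed via Bernoulli's law is, in general, only as regular as the monotonicity of $\psi$ along its gradient flow allows, and this can degenerate near stagnation. So the maximum-principle skeleton is appealing as a heuristic, but the crucial exhaustiveness of the trichotomy and the regularity of $F$ are left open, and it is precisely these points that the total-curvature argument of \cite{GXX_arxiv_2024} is engineered to control.
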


Examples of flows of type (I) and (II) are illustrated in Figures \ref{shear}, \ref{circular}, respectively. Flows of type (III) will be discussed below.

\begin{figure}[h]
\centering
\includegraphics[height=5cm, width=10cm]{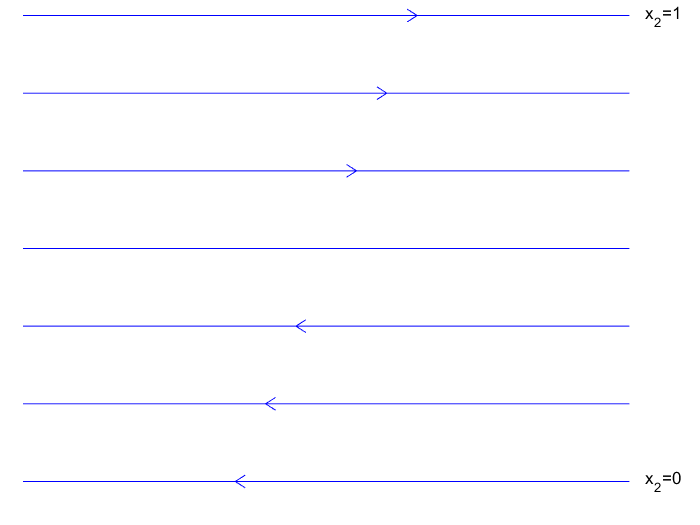}
\caption{Flows of type \RN{1}, $\bv=(x_2-\frac{1}{2}, 0)$.}
\label{shear}
\end{figure}

\begin{figure}[h]
\centering
\includegraphics[height=5cm, width=10cm]{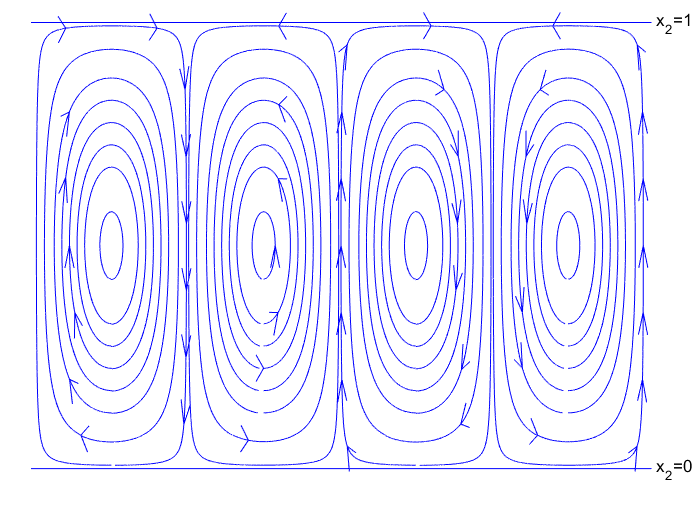}
\caption{Flows of type \RN{2}, $\bv=(-\pi \sin x_1\cos (\pi x_2), \cos x_1\sin (\pi x_2))$.}
\label{circular}
\end{figure}

The key of the proof for Theorem \ref{GXX_classification} lies in the estimate of the {\it total curvature}
\begin{align*}
\int_{\Omega}\frac{|v_1\nabla v_2-v_2\nabla v_1|^2}{|\bv|^2}\, dx,
\end{align*}
where the integrand is understood as $0$ in the {\it stagnation set} $\{\bv=0\}$. 
The estimate for the total curvature is based on the delicate energy estimate for the equation
\begin{align*}
\bv\cdot\nabla\omega=\Div(v_1\nabla v_2-v_2\nabla v_1)=0.
\end{align*}
Clearly, a flow is a parallel shear flow if and only if its total curvature is zero. Furthermore, a type (\RN{3}) flow is characterized as the non-shear extremal functions of an inequality concerning a sharp lower bound of the total curvature (see \cite{GXX_arxiv_2024}). Henceforth, we call type (\RN{3}) flow the {\it least total curvature flow}. It is noteworthy that each streamline in the {\it least total curvature flow} can turn at most 180 degrees, as opposed to 360 degrees. This characteristic, in essence, provides a local perspective that justifies the naming of this type of flow.

Besides the existence of the {least total curvature flow}, the following important properties for the {least total curvature flow} were also established in  \cite{GXX_arxiv_2024}. 

\begin{pro}[\hspace{1sp}\cite{GXX_arxiv_2024}]\label{search_guide}\label{pro_motiv}
Let $\bv=(v_1,v_2)\in C^1(\Bar{\Omega})$ be a bounded solution of \eqref{Euler} satisfying \eqref{slip boundary condition}. Assume $\Theta(\bv;\Bar{\Omega})=\S^1_+$.
\begin{enumerate}[label=(\roman*)]
\item  If $\Omega=\mbR_+^2$, then $v_1(\cdot,0)$ is nonincreasing in $\mbR$, and it holds that $\underline{v}_{1,-}=-\underline{v}_{1,+}>0$, and that 
\begin{align*}
\int_{\Omega}\frac{|v_1\nabla v_2-v_2\nabla v_1|^2}{|\bv|^2}\,dx=\frac{\pi}{2}\underline{v}^2_{1,+},
\end{align*}
where $\underline{v}_{1,\pm}=\lim_{x_1\to \pm\infty} v_1(x_1,0).$

\item  If $\Omega=\Omega_\infty$, then $v_1(\cdot,1)$ is nondecreasing in $\mbR$ and $v_1(\cdot,0)$ is nonincreasing in $\mbR$, and it holds that
\begin{align}\label{balancing_law}
\bar{v}_{1,+}^2-\bar{v}_{1,-}^2=\underline{v}_{1,+}^2-\underline{v}_{1,-}^2,
\end{align}
and 
\begin{align}\label{total_curvature_strip}
\int_{\Omega}\frac{|v_1\nabla v_2-v_2\nabla v_1|^2}{|\bv|^2}\,dx
=\frac{\pi}{4}\left(\bar{v}_{1,+}|\bar{v}_{1,+}|-\bar{v}_{1,-}|\bar{v}_{1,-}|+\underline{v}_{1,-}|\underline{v}_{1,-}|-\underline{v}_{1,+}|\underline{v}_{1,+}|\right),
\end{align}
where $\underline{v}_{1,\pm}=\lim_{x_1\to \pm \infty} v_1(x_1,0)$ and $\bar{v}_{1,\pm}=\lim_{x_1\to \pm \infty} v_1(x_1, 1).$
\end{enumerate}
\end{pro}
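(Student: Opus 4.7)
The plan is to exploit the fact that $\Theta(\bv;\bar\Omega)=\S^1_+$ forces $v_2\ge 0$ in $\Omega$, reduce all monotonicity and sign statements to the divergence-free relation combined with Bernoulli's law along the boundary streamlines, and then derive the total curvature identity through a global integration by parts based on the divergence-free character of the vector field $W:=v_1\nabla v_2-v_2\nabla v_1$. Because $v_2\ge 0$ in $\Omega$ and $v_2\equiv 0$ on each horizontal component of $\partial\Omega$ by \eqref{slip boundary condition}, one has $\partial_2 v_2(\cdot,0)\ge 0$ and (in the strip case) $\partial_2 v_2(\cdot,1)\le 0$; through $\partial_1 v_1=-\partial_2 v_2$ this produces the claimed monotonicity of $v_1$ on each boundary, and boundedness of $\bv$ provides the limits $\underline{v}_{1,\pm}$, $\bar{v}_{1,\pm}$. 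For the balancing law I would argue that as $x_1\to\pm\infty$ the flow subconverges in $C^1_{\mathrm{loc}}$ to a shear profile $(u_\pm(x_2),0)$, whose pressure is constant; Bernoulli's relation $\tfrac12|\bv|^2+P\equiv\text{const}$ along the top and bottom boundary streamlines then reads
\begin{align*}
\tfrac12\underline{v}_{1,+}^2+P_+=\tfrac12\underline{v}_{1,-}^2+P_-,\qquad \tfrac12\bar{v}_{1,+}^2+P_+=\tfrac12\bar{v}_{1,-}^2+P_-,
\end{align*}
and subtraction yields \eqref{balancing_law}. In the half-plane only the bottom identity is available, giving $\underline{v}_{1,-}^2=\underline{v}_{1,+}^2$; the hypothesis $\Theta=\S^1_+$ forces the boundary trace to reach both extremal directions $(\pm 1,0)$, and combined with monotonicity this produces the strict signs $\underline{v}_{1,-}=-\underline{v}_{1,+}>0$.

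For the total curvature, observe that $W=|\bv|^2\nabla\theta$ with $\theta=\arg\bv$, so the integrand equals $W\cdot\nabla\theta$. The vorticity transport $\bv\cdot\nabla\omega=0$ together with the local representation $\omega=f(\psi)$ on each connected component of $\{|\bv|>0\}$ gives $\Delta v_i=-f'(\psi)v_i$, whence $\Div W=v_1\Delta v_2-v_2\Delta v_1=0$. Integrating by parts on the exhausting rectangles $(-R,R)\times(0,1)$ (resp.\ $(-R,R)\times(0,R)$ in the half-plane) and sending $R\to\infty$ yields
\begin{align*}
\int_\Omega\frac{|W|^2}{|\bv|^2}\,dx=\int_{\partial\Omega}\theta\,(W\cdot\bn)\,dS,
\end{align*}
the lateral contributions vanishing via the shear asymptotics. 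On each horizontal boundary $v_2\equiv 0$, so $W\cdot\bn$ reduces to $\pm v_1\partial_1 v_1=\pm\tfrac12\partial_1(v_1^2)$, while $\theta\in\{0,\pi\}$ is piecewise constant according to $\sgn v_1$. By monotonicity, $\{v_1<0\}$ is a single half-line on each boundary, and a direct evaluation gives $\tfrac{\pi}{2}\underline{v}_{1,+}^2$ in the half-plane case and $\tfrac{\pi}{2}(\underline{v}_{1,+}^2+\bar{v}_{1,-}^2)$ in the strip case; in the strip, the balancing law \eqref{balancing_law} allows this to be rewritten in the symmetric $v_1|v_1|$ form of \eqref{total_curvature_strip}, which also handles the non-generic sign configurations.

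\medskip

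The most delicate step is the boundary identity for the total curvature. The equality $\Div W=0$ is truly global only after one verifies that the local representations $\omega=f(\psi)$ on the components of $\{|\bv|>0\}$ can be glued across the stagnation set $\{\bv=0\}$; and the vanishing of the lateral boundary integrals as $R\to\infty$ requires genuine, rather than merely subsequential, convergence of $\bv$ to a shear flow at $x_1=\pm\infty$. Both points rest on a fine structural analysis of the stagnation set and of the asymptotic behaviour of $\bv$, which I expect to be the real technical heart of the argument.
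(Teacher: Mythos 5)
This proposition is quoted from \cite{GXX_arxiv_2024}; the present paper does not reprove it, so there is no in-paper argument against which your proposal can be checked. The paper does, however, record the identity you rely on, namely $\bv\cdot\nabla\omega=\Div(v_1\nabla v_2-v_2\nabla v_1)=0$, as the engine of the total-curvature estimate.

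On the substance of your sketch: the worry you flag at the end, that $\Div W=0$ requires gluing the local relations $\omega=f(\psi)$ across $\{\bv=0\}$, is not actually an issue. For any divergence-free field one has the pointwise identity $\Div(v_1\nabla v_2-v_2\nabla v_1)=v_1\Delta v_2-v_2\Delta v_1=\bv\cdot\nabla\omega$, using $\Delta v_2=\partial_1\omega$ and $\Delta v_1=-\partial_2\omega$ (both consequences of $\Div\bv=0$), and this vanishes directly from the steady vorticity equation; no stream-function representation is needed. The genuinely delicate points lie elsewhere: (i) $\theta=\arg\bv$ is undefined on the stagnation set, so $|W|^2/|\bv|^2=W\cdot\nabla\theta$ holds only on $\{|\bv|>0\}$, and the integration by parts $\int_\Omega W\cdot\nabla\theta=\int_{\partial\Omega}\theta\,(W\cdot\bn)$ cannot be performed naively over all of $\Omega$; one must show that the stagnation set carries no flux of $W$, or work on an exhaustion of $\{|\bv|>0\}$, which is nontrivial. (ii) Your derivation of \eqref{balancing_law} and of the vanishing of the lateral contributions assumes full, not merely subsequential, convergence of $\bv$ and $P$ to shear states as $x_1\to\pm\infty$; you correctly flag this, but as written it is a gap, not a remark. (iii) In the half-plane, the equality $\underline{v}_{1,-}^2=\underline{v}_{1,+}^2$ does not follow from Bernoulli alone without knowing the boundary pressure has equal limits at $x_1\to\pm\infty$, and the appeal to $\Theta(\bv;\bar\Omega)=\S^1_+$ only guarantees that $(\pm1,0)$ are attained somewhere in $\bar\Omega$, not that they are boundary limits; the sign statement $\underline{v}_{1,-}=-\underline{v}_{1,+}>0$ needs its own argument. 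Also note that the identity $\Div W=\bv\cdot\nabla\omega$ uses second derivatives of $\bv$, whereas the proposition only assumes $\bv\in C^1(\bar\Omega)$, so even the starting point needs a regularity or weak-formulation fix. These are the true obstacles, which is why the cited source proceeds via a delicate energy estimate rather than the bare boundary computation you propose.
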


Motivated by Proposition \ref{pro_motiv}, the main scope of this paper is to study different types of least total curvature solutions based on the signs of $v_1$ on the physical boundary $\partial\Omega$.
Observe that if $\Omega = \mbR_+^2$, the function $v_1(\cdot, 0)$ changes sign exactly once, and this represents a unique type of least total curvature flow.
So we shall only consider the flows in strips (i.e., $\Omega=\Omega_\infty$) in the rest of this paper.

Assume $\bv\in C^1(\overline{\Omega_\infty})$ is a least total curvature flow with $\Theta(\bv;\overline{\Omega_\infty})=\S^1_+$. In view of \eqref{balancing_law} and \eqref{total_curvature_strip}, if all elements of $\mathcal{V}\vcentcolon=\{\bar{v}_{1,+}, \bar{v}_{1,-}, \underline{v}_{1,+}, \underline{v}_{1,-}\}$ are nonnegative or nonpositive, we infer that the total curvature must be zero so that $\bv$ is a shear flow. Hence, $\mathcal{V}$ has both strictly positive and strictly negative elements. Therefore, Proposition \ref{search_guide} yields the following corollary.

\begin{coro}
Assume $\bv\in C^1(\overline{\Omega_\infty})$ is a {\it least total curvature flow} with $\Theta(\bv;\overline{\Omega_\infty})=\S^1_+$. Then exactly one of the following cases holds.

\begin{itemize}
\item case (a): Both $v_1(\cdot,1)$ and $v_1(\cdot,0)$ change sign.
Since $v_1(\cdot,1)$ is nondecreasing and $v_1(\cdot,0)$ is nonincreasing, we infer that $\bar{v}_{1,-}<0<\bar{v}_{1,+}$ and $\underline{v}_{1,+}<0<\underline{v}_{1,-}$. 

\item case (b): Neither $v_1(\cdot,1)$ nor $v_1(\cdot,0)$ changes sign. Since $\tilde{\bv}(x)=(-v_1(-x_1,x_2),v_2(-x_1,x_2))$ is also a least total curvature flow with $\Theta(\tilde{\bv};\overline{\Omega_\infty})=\S^1_+$, we may essentially assume that $v_1(\cdot,1)\ge 0$ and $v_1(\cdot,0)\le 0$. Then it must hold that $\bar{v}_{1,+}>0$ and $\underline{v}_{1,+}<0$.

\item case (c): Only one of $v_1(\cdot,1)$ and $v_1(\cdot,0)$ changes sign. 
Since both $\tilde{\bv}(x)$ and  $\hat{\bv}(x)=\bv(-x_1,1-x_2)$ are least total curvature flows, we may assume that $v_1(\cdot,1)$ changes sign but $v_1(\cdot,0)\le 0$. Then the monotonicity of $v_1(\cdot,1)$ yields $\bar{v}_{1,-}<0<\bar{v}_{1,+}$.
\end{itemize}
\end{coro}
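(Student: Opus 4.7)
The three cases (a), (b), (c) are evidently exhaustive and mutually exclusive, since they enumerate the possibilities ``both change sign / neither changes sign / exactly one changes sign'' for the pair of boundary traces $v_1(\cdot,0)$ and $v_1(\cdot,1)$. Hence what needs proof is only the sign conclusions within each case. My plan is to combine (i) the monotonicity from Proposition \ref{pro_motiv}(ii)---$v_1(\cdot,1)$ nondecreasing and $v_1(\cdot,0)$ nonincreasing---with (ii) the already-established fact that $\mathcal{V}$ contains both strictly positive and strictly negative values, and to invoke the two symmetries $\tilde{\bv}(x)=(-v_1(-x_1,x_2),v_2(-x_1,x_2))$ and $\hat{\bv}(x)=\bv(-x_1,1-x_2)$. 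I will first verify directly that both preserve \eqref{Euler}, the slip condition \eqref{slip boundary condition}, and the range $\Theta=\S^1_+$, hence also the class of least total curvature flows under consideration.

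For case (a), I would observe that since $v_1(\cdot,1)$ is nondecreasing and changes sign, its limits satisfy $\bar{v}_{1,-}\le 0\le \bar{v}_{1,+}$, and moreover both inequalities are strict: if $\bar{v}_{1,+}=0$, monotonicity would force $v_1(\cdot,1)\le 0$ throughout, precluding a sign change. The analogous argument on the bottom boundary yields $\underline{v}_{1,+}<0<\underline{v}_{1,-}$. For case (c), I would first apply $\hat{\bv}$ (which swaps the two horizontal boundaries) to put the sign-changing trace on top, and then apply $\tilde{\bv}$ (which flips the sign of $v_1$ on each boundary without altering the location of the sign change) to arrange $v_1(\cdot,0)\le 0$; the conclusion $\bar{v}_{1,-}<0<\bar{v}_{1,+}$ then follows exactly as in case (a).

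The subtler case, and the only place I expect real work, is (b). The four a priori sign patterns of $(v_1(\cdot,1),v_1(\cdot,0))$ without a sign change are $(\ge 0,\ge 0)$, $(\le 0,\le 0)$, $(\ge 0,\le 0)$, and $(\le 0,\ge 0)$. The first two render $\mathcal{V}$ of constant sign, which is excluded; the last two are exchanged by $\tilde{\bv}$, so up to this symmetry I may assume $v_1(\cdot,1)\ge 0$ and $v_1(\cdot,0)\le 0$. The strict inequality $\bar{v}_{1,+}>0$ will then come from the following: if $\bar{v}_{1,+}=0$, monotonicity would force $v_1(\cdot,1)\equiv 0$, and combined with $v_1(\cdot,0)\le 0$ this would make all of $\mathcal{V}$ nonpositive, contradicting its non-degeneracy; a symmetric argument on the bottom boundary gives $\underline{v}_{1,+}<0$. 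The hardest piece is therefore the strict inequalities in (b), which, in contrast with (a) and (c), genuinely require the non-degeneracy of $\mathcal{V}$ and not merely boundary monotonicity.
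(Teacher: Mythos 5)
Your proposal is correct and follows essentially the same route as the paper, whose "proof" is entirely contained in the short derivations inside each bullet and in the paragraph preceding the corollary (which establishes that $\mathcal{V}$ has both strictly positive and strictly negative elements). Your observation that only case (b) genuinely needs the non-degeneracy of $\mathcal{V}$, while (a) and (c) follow from monotonicity together with the sign-change hypothesis alone, is an accurate and slightly sharper reading of the same argument, not a different one.
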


The existence of the least total curvature flows in case (a) has been established in \cite{GXX_arxiv_2024}, with the corresponding streamline pattern depicted in Figure \ref{case1}.
Subsequently, the existence for case (b) is established in \cite{DR_arxiv_2024}. See Figure \ref{case2} for the corresponding streamline pattern. The first main contribution of this paper is the construction of a least total curvature flow in case (c).

\begin{figure}[h]
	\centering
	\includegraphics[height=5cm, width=10cm]{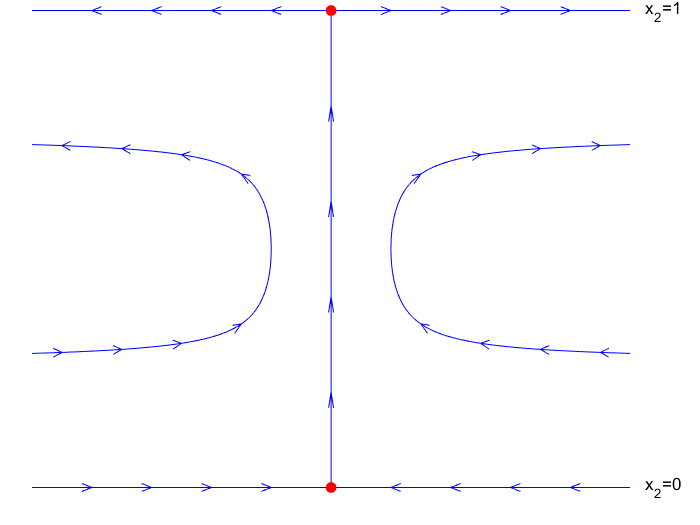}
	\caption{The least total curvature flows in case (a).}
	\label{case1}
\end{figure}

\begin{figure}[h]
\centering
\includegraphics[height=5cm, width=10cm]{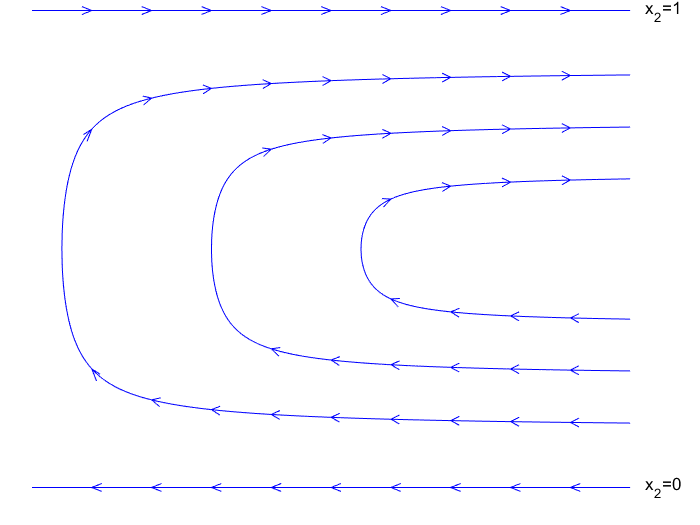}
\caption{The least total curvature flows in case (b).}
\label{case2}
\end{figure}

Our first theorem is precisely stated as follows.

\begin{theorem}\label{main theorem}
There exists a bounded solution $\bv\in C^\infty(\overline{\Omega_{\infty}})$ to \eqref{Euler}-\eqref{slip boundary condition} satisfying the following properties:
\begin{enumerate}
    \item[(i)] $v_2>0$ in $\Omega_{\infty}$;

\item[(ii)] $v_1(\cdot,0)\le \underline{v}_{1,-}<0$ on $\mbR$, $v_1(\cdot,1)>0$ on $(0,\infty)$, and $v_1(\cdot,1)<0$ on $(-\infty,0)$.
\end{enumerate}
\end{theorem}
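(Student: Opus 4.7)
The plan is to reduce \eqref{Euler}--\eqref{slip boundary condition} to a semilinear elliptic problem for a stream function $\psi$ on $\Omega_\infty$ through the representation $\bv=(\partial_{x_2}\psi,-\partial_{x_1}\psi)$. The slip condition forces $\psi$ to be constant on each component of $\partial\Omega_\infty$, so I would normalize $\psi\equiv 0$ on $\{x_2=0\}$ and $\psi\equiv q$ on $\{x_2=1\}$ for a constant flux $q$ to be chosen. Under the ansatz that vorticity is a function of $\psi$, the Euler system becomes $-\Delta\psi=f(\psi)$ in $\Omega_\infty$ for a nonlinearity $f$ at our disposal; the requirements of Theorem \ref{main theorem} translate into $\partial_{x_1}\psi<0$ in $\Omega_\infty$, together with $\partial_{x_2}\psi(\cdot,0)\le \underline{v}_{1,-}<0$ and a trace $\partial_{x_2}\psi(\cdot,1)$ that crosses zero exactly at $x_1=0$.

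The next step is to design $f$ and $q$ so that the ODE $-U''=f(U)$ on $[0,1]$ with $U(0)=0$, $U(1)=q$ admits two distinct solutions $U^-,U^+$, intended to be the asymptotic shear states as $x_1\to-\infty$ and $x_1\to+\infty$ respectively. The ``left'' profile $U^-$ should satisfy $(U^-)'(0)<0$ and $(U^-)'(1)<0$ so that its associated shear has $v_1<0$ on both boundaries, while the ``right'' profile $U^+$ should satisfy $(U^+)'(0)<0$ and $(U^+)'(1)>0$, which forces $U^+$ to overshoot above $\max\{0,q\}$ and therefore to have non-convex superlevel sets. The potential $F$ with $F'=f$ is tuned via the ODE Hamiltonian conservation law so that $U^\pm$ share a common energy level making a heteroclinic feasible. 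I would then obtain $\psi$ as a minimizer of a renormalized action of the form
\begin{equation*}
\mathcal{E}(\psi)=\lim_{L\to\infty}\left[\int_{(-L,L)\times(0,1)}\left(\tfrac{1}{2}|\nabla\psi|^2-F(\psi)\right)dx-E^-_L-E^+_L\right],
\end{equation*}
where $E^\pm_L$ are the reference actions of $U^\pm$ on $(-L,0)\times(0,1)$ and $(0,L)\times(0,1)$ respectively, taken over the class of admissible functions matching the boundary data, the asymptotics $\psi(x_1,\cdot)\to U^\pm$ as $x_1\to\pm\infty$, and monotonicity in $x_1$ (enforced by a sliding rearrangement that I expect to cost no action, or alternatively derived a posteriori by a Berestycki--Nirenberg-type sliding argument). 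Standard elliptic regularity upgrades $\psi$ to $C^\infty(\overline{\Omega_\infty})$, the strong maximum principle and Hopf lemma applied to $\partial_{x_1}\psi$ give $v_2>0$ in $\Omega_\infty$, and the monotonicity of $v_1$ on each edge from Proposition \ref{pro_motiv}(ii), combined with the asymptotic convergence to $U^\pm$ and a translation fixing the zero of $v_1(\cdot,1)$ at $x_1=0$, yields the remaining trace conditions.

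The main obstacle I foresee is twofold. First, engineering $f$ and $q$ so that both shears coexist with the asymmetric sign pattern of case (c) is delicate, as it corresponds to two distinct orbits between the levels $\{0,q\}$ in the ODE phase plane, one monotone and one with a turning point above both endpoints; this is the heart of the non-convex superlevel set phenomenon highlighted in the abstract and is what distinguishes case (c) from the constructions already carried out in \cite{GXX_arxiv_2024,DR_arxiv_2024}. Second, ruling out the degeneration of the minimizer to a mere translate of $U^-$ or $U^+$ (loss of compactness in the unbounded $x_1$ direction) requires producing an explicit competitor of strictly negative renormalized action, together with a concentration-compactness analysis to exclude splitting of the profile into several traveling transitions and to guarantee a genuine single heteroclinic.
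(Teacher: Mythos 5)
Your high-level plan is the same as the paper's: pass to a stream function, reduce to $-\Delta \psi = f(\psi)$ in the strip with constant Dirichlet data, find two one-dimensional profiles with the required sign pattern on their slopes at $x_2=0,1$, and connect them by a monotone-in-$x_1$ heteroclinic. That part is correct and essentially coincides with Sections \ref{sec_heter}--\ref{sec_pf1}. However, there are two substantive gaps and a couple of errors.

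First, you explicitly name ``engineering $f$ and $q$ so that both shears coexist with the asymmetric sign pattern of case (c)'' as an obstacle and do not resolve it. This is precisely the content of Proposition \ref{pro_findf} and is the genuine novelty needed to go from the scheme of Proposition \ref{pro_scheme} to Theorem \ref{main theorem}. The paper resolves it by taking $F_\lambda(s) = \chi(s)^3 - \lambda\chi(s)^4$ with $\chi$ a cutoff vanishing for $s\le 1$, observing that $\phi(t)=t$ is always a nondegenerate local minimizer of $I_\lambda$ with $I_\lambda(\phi)=\tfrac12$, and then sliding $\lambda$ to a critical value $\lambda^*$ at which $\inf I_{\lambda^*}=\tfrac12$ is attained both at $\phi$ and at a second, strictly larger, concave profile $\varphi$ with an interior maximum exceeding $1$; a minimal such $\varphi$ is then extracted so that no third minimizer sits in between (Assumption \ref{two_ends}(2)). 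Without something like this, your proof is not complete: asserting that ``the potential $F$ is tuned via the Hamiltonian conservation law'' so that the two orbits share an energy level begs the question of \emph{existence} of such a potential with the prescribed boundary slopes, which is exactly what Proposition \ref{pro_findf} supplies.

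Second, the heteroclinic construction you sketch via a renormalized action $\mathcal{E}$ and concentration--compactness is not what the paper does, and you leave its crucial steps open. The paper instead minimizes $J_n$ over truncated domains $\Omega_n = (-n,n)\times(0,1)$ with Dirichlet data $\phi,\varphi$ at the two ends, truncates competitors to lie between $\phi$ and $\varphi$, derives $\partial_{x_1}u_n>0$ from the nonnegativity of the principal eigenvalue of the linearized operator (a global-minimizer argument, not a sliding/rearrangement argument), pins a reference point via the level $\tfrac12(\phi(1/2)+\varphi(1/2))$ on $\{x_2=1/2\}$, and proves via the Hamiltonian identity of \cite{G_JFA_2008} that the translated sequence neither drifts to the boundary nor converges to a trivial limit. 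Your suggestion that a monotone rearrangement ``costs no action'' and that compactness follows from a competitor of negative renormalized action plus a splitting analysis are plausible, but both steps are nontrivial and are not carried out; in particular the well-posedness of $\mathcal{E}$ on a suitable admissible class and the exclusion of multi-bump splitting would each need proofs. The compact-exhaustion route in the paper avoids these entirely.

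Finally, two smaller points. With your sign convention $\bv=(\partial_{x_2}\psi,-\partial_{x_1}\psi)$ and monotone decrease $\partial_{x_1}\psi<0$, the slopes $(U^+)'(0)<0$, $(U^+)'(1)>0$ force $U^+$ to \emph{undershoot} below $\min\{0,q\}$, not overshoot above $\max\{0,q\}$; this is a sign slip, but worth correcting. Also, the non-convex superlevel set phenomenon is the subject of Theorem \ref{theostable} (the $c=0$ case), not the essence of case (c); case (c) is distinguished from cases (a),(b) by the sign pattern of $v_1$ on the two boundary components, as laid out in the Corollary following Proposition \ref{pro_motiv}.
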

See Figure \ref{case3} for the streamline pattern.

\begin{figure}[h]
\centering
\includegraphics[height=5cm, width=10cm]{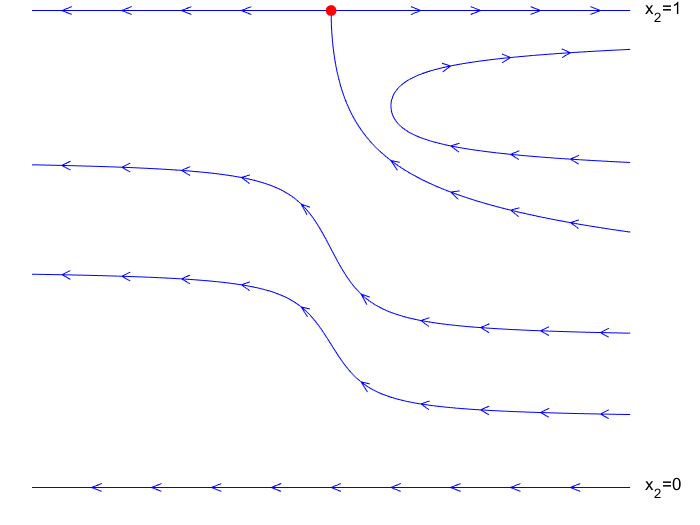}
\caption{The least total curvature flows in case (c).}
\label{case3}
\end{figure}

To prove Theorem \ref{main theorem}, we shall establish the existence of a monotone (in the $x_1$-direction) solution to the following boundary value problem of  semilinear elliptic equation
\begin{eqnarray}\label{slee}
\left\{\begin{aligned}
&-\Delta u=f(u), & x\in \Omega_\infty,\\
&u(x_1,0)=0, & x_1\in \mbR,\\
&u(x_1,1)=c, & x_1\in \mbR,
\end{aligned}\right.
\end{eqnarray}
where $c$ is a nonnegative constant and $f$ is a particularly chosen smooth function. Define
\[
F(u)=\int_0^u f(s)\,ds.
\]
Then $(\bv,P)$ defined by
\begin{align}\label{euler_solution}
(v_1,v_2,P)=\left(-\partial_{x_2}u,\partial_{x_1}u, -F(u)-\frac12|\nabla u|^2\right)    
\end{align}
solves \eqref{Euler}-\eqref{slip boundary condition}. Hence the key issue is to prove the existence of solutions to \eqref{slee}, which satisfy $\partial_{x_1}u>0$ in $\Omega_\infty$ and 
\[
\partial_{x_2}u(x_1,0)\ge -\underline{v}_{1,-}>0\,\,\text{for}\,\,x_1\in \mbR, \,\, \partial_{x_2}u(x_1,1)<0\,\,\text{for}\,\, x_1>0,\,\,\text{and}\,\,  \partial_{x_2}u(x_1,1)>0\,\,
\text{for}\,\,x_1<0.
\]

Furthermore, inspired by our analysis for Theorem \ref{main theorem}, we can prove the following result.

\begin{theorem} \label{theostable} 
There exists a $C^{\infty}$ function $f$ and a positive solution $u$ of \eqref{slee} with $c=0$. Furthermore, $u$ is monotone along the $x_1$ direction and has at least one non-convex superlevel set $\{x \in \Omega_{\infty}: \ u(x) > \alpha\}$ for some $\alpha >0$.
\end{theorem}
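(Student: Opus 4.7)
My strategy is to produce $u$ as a monotone heteroclinic in $x_1$ connecting two distinct \emph{positive} one-dimensional steady states $U^-<U^+$ of the Dirichlet problem $-U''=f(U)$, $U(0)=U(1)=0$. For the level $\alpha:=\max U^-=U^-(1/2)>0$, the superlevel set $\{u>\alpha\}$ will contain the entire horizontal line $\mbR\times\{1/2\}$—since $u(\cdot,1/2)$ is strictly increasing in $x_1$ with asymptote $\alpha$ at $-\infty$—yet be confined to $\mbR\times\{x_2:U^+(x_2)>\alpha\}$, a bounded interval in $x_2$. This forces non-convexity. Note that, by autonomous phase-plane analysis, every positive 1D Dirichlet solution is symmetric about $x_2=1/2$ with a single interior maximum; thus the non-convexity genuinely comes from the transition between two 1D states, not from any exotic shape of $U^+$.

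\textbf{Step 1: Two 1D states with matched energy.} I would choose $f\in C^\infty$ with $f(0)=0$ and $f'(0)<\pi^2$ so that the half-period
\begin{equation*}
T(E)=\int_0^{U^*(E)}\frac{dU}{\sqrt{2(E-F(U))}}, \qquad F(U)=\int_0^U f,
\end{equation*}
equals $1$ at two distinct energies (here $U^*(E)$ is the first positive root of $F(U)=E$); phase-plane analysis then yields two positive 1D solutions $U^-<U^+$. I additionally impose the balance
\begin{equation*}
\mathcal{E}_{1D}[U^-]=\mathcal{E}_{1D}[U^+], \qquad \mathcal{E}_{1D}[U]:=\int_0^1\left(\tfrac12 (U')^2 - F(U)\right)dx_2,
\end{equation*}
which is necessary for a stationary heteroclinic: the quantity $\int_0^1\bigl(\tfrac12 u_{x_1}^2-\tfrac12 u_{x_2}^2+F(u)\bigr)dx_2$ is conserved in $x_1$ for any solution of \eqref{slee}, forcing equality of $-\mathcal{E}_{1D}$ at $x_1=\pm\infty$. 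This codimension-one constraint on $f$ can be met in a one-parameter family of $S$-shaped $F$'s via a Maxwell-type adjustment.

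\textbf{Step 2: Heteroclinic construction.} Adapting the minimization scheme of Theorem \ref{main theorem}, I construct $u$ as the limit $N\to\infty$ of minimizers of a suitably renormalized energy on the cylinder $(-N,N)\times(0,1)$, with boundary data $u(\pm N,\cdot)=U^\pm$ and $u=0$ on $\partial(0,1)$. The balance condition provides uniform energy bounds, the sliding method yields strict monotonicity $\partial_{x_1}u_N>0$, and elliptic regularity passes to the limit. A compactness argument then identifies the asymptotic profiles $u(\cdot,x_2)\to U^\pm$ in $C^2([0,1])$ as $x_1\to\pm\infty$. Positivity $u>0$ in $\Omega_\infty$ follows from $u\ge U^->0$ in the interior.

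\textbf{Step 3: Non-convexity and main obstacle.} Set $\alpha:=U^-(1/2)$ and $(a,b):=\{x_2\in(0,1):U^+(x_2)>\alpha\}$. Strict monotonicity combined with the asymptote $u(\cdot,1/2)\to\alpha$ as $x_1\to-\infty$ gives $u(x_1,1/2)>\alpha$ for every $x_1\in\mbR$, so $\mbR\times\{1/2\}\subset\{u>\alpha\}$. For $x_2\notin(a,b)$, $u(x_1,x_2)\le U^+(x_2)\le\alpha$, excluding those points. For any $y\in(a,1/2)$ there is a finite unique $g_\alpha(y)\in\mbR$ with $u(g_\alpha(y),y)=\alpha$; then the points $P_1=(-N,1/2)$ and $P_2=(g_\alpha(y)+1,y)$ both lie in $\{u>\alpha\}$, but their midpoint has $x_2$-coordinate $(y+1/2)/2\in(y,1/2)$ (where $g_\alpha$ is finite) and $x_1$-coordinate tending to $-\infty$ as $N\to\infty$, placing it outside $\{u>\alpha\}$ for $N$ large. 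Hence $\{u>\alpha\}$ is not convex. The principal difficulty lies in Step 2: carrying out the heteroclinic minimization between two \emph{positive} 1D states subject to the balance condition, and ruling out the possibility that the limit profile at $+\infty$ degenerates to an intermediate 1D solution rather than $U^+$. Once this has been handled, Step 3 is a short geometric consequence.
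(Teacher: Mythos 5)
Your overall architecture is the right one and matches the paper's in spirit: build a monotone heteroclinic in $x_1$ connecting two positive 1D Dirichlet profiles $U^-<U^+$ of $-U''=f(U)$, then argue non-convexity by noting that the superlevel set at $\alpha=\max U^-$ must contain the whole line $\mathbb{R}\times\{1/2\}$ yet fails to contain a strip.  Your Step~3 geometric argument (take $P_1=(-N,1/2)$, $P_2$ slightly past the free boundary at a height $y\ne 1/2$, and send $N\to\infty$ so that the midpoint lands where $u$ is close to $U^-$, which is strictly below $\alpha$) is a correct alternative to the paper's proof, which instead shows that convexity would force $U^\alpha$ to be a full strip $\mathbb{R}\times(a,b)$ and then derives $\phi(a)=\varphi(a)$, a contradiction.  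Both work.

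There is, however, a genuine and self-acknowledged gap in Step~2.  Matching the 1D energies $\mathcal{E}_{1D}[U^-]=\mathcal{E}_{1D}[U^+]$ is necessary for the heteroclinic to exist, but it is not sufficient: the minimization on $(-N,N)\times(0,1)$ can, in the limit, converge to a heteroclinic connecting $U^-$ to some \emph{intermediate} 1D minimizer rather than to $U^+$.  What is really needed is the paper's Assumption~\ref{two_ends}: $U^-$ and $U^+$ are both \emph{global} minimizers of the 1D energy and there is no third global minimizer between them.  You flag ``ruling out the possibility that the limit profile at $+\infty$ degenerates to an intermediate 1D solution'' as the principal difficulty, but you do not resolve it, and it is exactly the nontrivial content of the paper's Proposition~\ref{pro-stable}.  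The paper closes this gap by a specific construction: taking $f(\psi)=2+f_\lambda(\psi)$ with $f_\lambda(\psi)=\chi'(\psi)(3\chi(\psi)^2-4\lambda\chi(\psi)^3)$, so that $\phi(t)=t(1-t)$ is a solution for every $\lambda$ and a nondegenerate local minimum, then showing by a continuity/intermediate-value argument in $\lambda$ that at a threshold $\hat\lambda^*$ the value $\hat{m}_{\hat\lambda^*}$ equals $\hat I_{\hat\lambda^*}(\phi)=-1/6$ and is also achieved by another solution; after showing the set of global minimizers is totally ordered and has a \emph{minimal} element $\varphi$ strictly above $\phi$, Assumption~\ref{two_ends} follows.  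Your ``Maxwell-type adjustment of an $S$-shaped $F$'' and the period-map discussion sketch the matched-energy condition, but they do not by themselves produce the required absence of intermediate global minimizers, nor do they establish that $U^\pm$ are global (not merely local or codimension-zero critical) minimizers, which is needed both for the lower bound $e_n\geq 2nI(\phi)$ and for the Hamiltonian-identity step that identifies the limits at $\pm\infty$.  Until that part is carried out, the heteroclinic construction is incomplete.
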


The streamline pattern for the corresponding Euler flow is illustrated by Figure \ref{non_convex}. Note that this flow also belongs to case (b).

\begin{figure}[h]
\centering
\includegraphics[height=5cm, width=10cm]{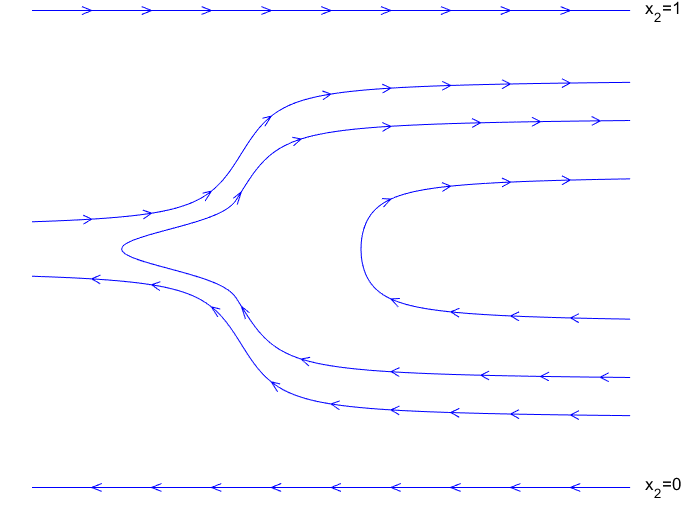}
\caption{Nonconvex streamlines.}
\label{non_convex}
\end{figure}

We have the following remarks on Theorem \ref{theostable}.

\begin{remark}
In \cite[Remark 3 in page 268]{Lions}, P. L. Lions writes that, in a bounded convex domain $\Omega$, ``We believe that ... for general $f$, the (super)level sets of
any (positive solution of semilinear elliptic equation with zero Dirichlet boundary condition)  are convex''.  This problem was restated in  \cite{Brezis99,CC98}).
There is indeed a vast literature containing
some proofs of this longstanding problem for various nonlinearities $f$. 
 In particular, the superlevel sets are convex if $f=1$ (\hspace{1sp}\cite{ML71}), $f= \lambda_1 u$ (\hspace{1sp}\cite{BL76}) (with $\lambda_1$ the principal eigenvalue of the Dirichlet Laplacian), or $f(u)= u^p$ with $p \in (0,1)$ (\hspace{1sp}\cite{K85}). Quite surprisingly, the convexity of superlevel sets can fail for some function $f(u)$ with particually chosen domain, and a counter-example has been found by Hamel, Nadirashvili, and Sire in \cite{HNS16}. This example, though, is found via a constrained minimization, and one would expect it to be unstable. Here by stability, we mean that the quadratic form associated with the linearized operator is nonnegative. As a consequence, it is rather natural to conjecture the convexity of superlevel sets for positive stable solutions to semilinear elliptic equation in bounded convex domains with zero Dirichlet boundary condition. This is also raised by Hamel, Nadirashvili, and Sire as an open problem. More precisely, in \cite[Remark 1.3 in page 502]{HNS16}, they mention that ``proving the quasiconcavity from the semi-stability is
still an open question''.  The solution constructed in Theorem \ref{theostable} is positive and monotone (which is known to imply stability), and hence serves as a counterexample to the above mentioned question in \cite{HNS16} in the \emph{unbounded} convex domain $\Omega_{\infty}$. 
\end{remark}

\begin{remark}
    It should be noted that  geometric properties of streamlines, curvature and convexity, etc.,  play an important role in understanding both incompressible and compressible flows; see \cite[Conjecture 1]{CJS}, \cite{TianCui}, etc.
\end{remark}

Here we give the key ideas for the proof of Theorems \ref{main theorem} and  \ref{theostable}. The proof takes two steps. First, we look for solutions to \eqref{slee} as heteroclinic solutions connecting certain functions $\underline{u}$ and $\bar{u}$. As in \cite{DR_arxiv_2024}, this can be done as long as $\underline{u}$ and $\bar{u}$ are energy minimizers without other minimizers in between. In the second step, our aim is to find a suitable nonlinearity $f$ so that the one-dimensional energy minimizers $\bar{u}$ and $\underline{u}$ have the desired properties. In \cite{DR_arxiv_2024}, $\underline{u}$ and $\bar{u}$ were chosen as 0 and a positive minimizer, respectively. To establish Theorem \ref{main theorem}, we construct a monotone minimizer $\underline{u}$ and a non-monotone minimizer $\bar{u}$ that lies above $\underline{u}$. Lastly, we modify $f$ so that \eqref{slee} has two positive one-dimensional minimizes, thus concluding the proof of Theorem \ref{theostable}.

The rest of the paper is organized as follows. In Section \ref{sec_heter}, following \cite{DR_arxiv_2024}, we construct monotone heteroclinic solutions connecting two energy minimizers when there does not exist another minimizer between them. Section \ref{sec_pf1} and Section \ref{sec_pf2} are devoted to the proof of Theorem \ref{main theorem} and Theorem \ref{theostable}, respectively.


\section{Schematic construction of monotone heteroclinic solutions}\label{sec_heter}

This section is devoted to the proof of the existence of monotone solutions of \eqref{slee} which connects two one-dimensional solutions under certain assumptions. 
Note that Theorem \ref{main theorem} is proved as long as one can establish the existence of a solution to \eqref{slee} such that
\[
\partial_{x_1}u \ge 0, \ \ \partial_{x_2}u(\cdot, 0) >0, \ \partial_{x_2}u(\cdot, 1) \mbox{ changes sign.}
\]
Because of the monotonicity along the $x_1$-direction, we can define
\[
\underline{u}(x_2)=\lim_{x_1\rightarrow-\infty}u(x_1,x_2),\quad
\bar{u}(x_2)=\lim_{x_1\rightarrow+\infty}u(x_1,x_2).
\]
At least formally, $\underline{u}< \bar{u}$ are two solutions to 
\begin{eqnarray}\label{ode}
	\left\{\begin{aligned}
		&-\psi''=f(\psi), & {\rm in}\ (0,1),\\
		&\psi(0)=0, \psi(1)=c.
	\end{aligned}\right.
\end{eqnarray}

 Roughly speaking, this can be done as long as $\underline{u}$ and $\overline{u}$ are two global minima of the energy functional related to \eqref{ode},
\begin{equation} \label{energyfunctional}
I(\psi)=\int_0^1\left(\frac12|\psi'(x_2)|^2- F(\psi(x_2))\right) \, dx_2,   
\end{equation}
 defined for all $\psi\in \mcH_{0,c}(0,1)$, where
\[
\mcH_{0,c}(0,1)=\{\psi\in H^{1}(0,1): \psi(0)=0, \psi(1)=c  \}.
\]

Being more specific, we will work under the following conditions.

\begin{assumption}\label{two_ends}
There are two global minimizers $\phi,\varphi\in \mcH_{0,c}(0,1)$  of $I$, defined in \eqref{energyfunctional},  such that the following two properties hold.
	
	(1) $\phi<\varphi$ in $(0,1)$. 
	
	(2) There does not exist any global minimizer between $\phi$ and $\varphi$, that is, if $\eta$ is a global minimizer satisfying $\phi\le \eta\le \varphi$, then either $\eta\equiv\phi$ or $\eta\equiv\varphi$. 
\end{assumption}

Under Assumption \ref{two_ends}, our goal is to construct a monotone solution $u$ to the problem \eqref{slee} that connects $\phi$ and $\varphi$. 
Specifically,  this section is devoted to the proof of the following proposition. 
\begin{pro}\label{pro_scheme}
	Suppose $f\in C^1$ is such that Assumption \ref{two_ends} is fulfilled. Then \eqref{slee} has a solution $u\in C^{2,\alpha}(\overline{\Omega_\infty})$ (for any $\alpha \in (0,1)$) that satisfies 
	\begin{align*}
		\partial_{x_1}u>0\quad in\ \Omega_\infty,
	\end{align*}
	and 
	\begin{align*}
		\lim_{x_1\rightarrow-\infty}u(x_1,x_2)=\phi(x_2),\quad
		\lim_{x_1\rightarrow+\infty}u(x_1,x_2)=\varphi(x_2)
	\end{align*}
	uniformly in $x_2\in[0,1]$.
\end{pro}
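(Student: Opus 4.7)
The plan is to construct the monotone heteroclinic solution by exhausting $\Omega_\infty$ with bounded strips $\Omega_L = (-L, L) \times (0,1)$, solving a constrained minimization problem on each, and passing to the limit $L \to \infty$, as in \cite{DR_arxiv_2024}. First I would define the admissible class
\begin{equation*}
\mathcal{A}_L = \left\{ u \in H^1(\Omega_L) : u(\cdot, 0) \equiv 0, \ u(\cdot, 1) \equiv c, \ u(-L, \cdot) = \phi, \ u(L, \cdot) = \varphi, \ \phi \le u \le \varphi \right\},
\end{equation*}
and the energy $J_L(u) = \int_{\Omega_L} \bigl( \tfrac{1}{2} |\nabla u|^2 - F(u) \bigr) \, dx$. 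The $L^\infty$ bound from $\phi \le u \le \varphi$ and coercivity of the leading term give a minimizer $u_L$ by the direct method; the pointwise constraint is not binding, since the truncation $u \mapsto \min\{\max\{u, \phi\}, \varphi\}$ cannot increase $J_L$ by the global minimality of $\phi$ and $\varphi$ for the one-dimensional functional $I$. Any $u_L$ solves $-\Delta u_L = f(u_L)$ in the interior.

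Next I would symmetrize in $x_1$ by a monotone (increasing) rearrangement in that variable. Under the pointwise constraint, the boundary traces satisfy $u_L(-L, x_2) = \phi(x_2) = \inf_{x_1} u_L(x_1, x_2)$ and $u_L(L, x_2) = \varphi(x_2) = \sup_{x_1} u_L(x_1, x_2)$, so the rearrangement preserves all boundary data, decreases the Dirichlet energy (P\'olya--Szeg\H{o}), and preserves $\int F(u) \, dx$. Hence we may assume $\partial_{x_1} u_L \ge 0$, and the strong maximum principle applied to $\partial_{x_1} u_L$, which satisfies the linearized equation $-\Delta (\partial_{x_1} u_L) = f'(u_L) \partial_{x_1} u_L$, upgrades this to $\partial_{x_1} u_L > 0$ in $\Omega_L$. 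Choosing $x_2^* \in (0,1)$ and $\alpha \in (\phi(x_2^*), \varphi(x_2^*))$, I then translate $u_L$ in $x_1$ so that $u_L(0, x_2^*) = \alpha$.

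Uniform interior $C^{2,\alpha}$ estimates together with a diagonal Arzel\`a--Ascoli extraction deliver a subsequential limit $u \in C^{2,\alpha}(\overline{\Omega_\infty})$ solving \eqref{slee}, with $\phi \le u \le \varphi$, $\partial_{x_1} u \ge 0$ (made strict by another use of the maximum principle), and $u(0, x_2^*) = \alpha$; a short argument rules out the possibility that the translation parameters drift to $\pm L$, so the limit is defined on all of $\Omega_\infty$. Monotonicity and boundedness yield pointwise limits $\underline{u}(x_2) = \lim_{x_1 \to -\infty} u(x_1, x_2)$ and $\bar{u}(x_2) = \lim_{x_1 \to +\infty} u(x_1, x_2)$, and a standard translation-and-compactness argument identifies them as classical solutions of \eqref{ode} in $\mcH_{0,c}(0,1)$ with $\phi \le \underline{u} \le \bar{u} \le \varphi$, while the normalization forces $\underline{u}(x_2^*) \le \alpha \le \bar{u}(x_2^*)$, so that $\underline{u} \not\equiv \bar{u}$.

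The main obstacle is to upgrade $\underline{u}$ and $\bar{u}$ from mere solutions of \eqref{ode} to \emph{global minimizers} of $I$ on $\mcH_{0,c}(0,1)$, for only then does Assumption \ref{two_ends}(2) close the argument by forcing $\underline{u} = \phi$ and $\bar{u} = \varphi$. The strategy, which parallels \cite{DR_arxiv_2024} and forms the technical heart of the proof, is a cut-and-paste competitor construction: if some $\psi \in \mcH_{0,c}(0,1)$ had $I(\psi) < I(\bar{u})$, one would insert translated copies of $\psi$ on a large slab near $x_1 = L$, bridged to $u_L$ by $x_1$-transition layers of bounded energy cost, to produce an admissible competitor strictly beating $u_L$ for all $L$ large enough, contradicting the minimality of $u_L$. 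Carefully balancing the gluing errors against the bulk energy gain is the delicate step on which the whole argument rests.
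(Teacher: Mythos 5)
Your overall strategy matches the paper's: minimize on expanding bounded strips, establish monotonicity in $x_1$, normalize by translation, and pass to the limit. Two of your ingredients differ, and one of them leaves a genuine gap.

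For the monotonicity step you propose a monotone rearrangement in $x_1$ and P\'olya--Szeg\H{o}, whereas the paper shows $\partial_{x_1}u_n>0$ directly by observing that the principal eigenvalue of the linearized operator around the global minimizer $u_n$ is nonnegative and then invoking the refined maximum principle of Berestycki--Nirenberg--Varadhan (treating $\lambda_1>0$ and $\lambda_1=0$ separately). Your route is plausible provided you verify that the monotone rearrangement preserves the $H^1$ traces at $x_1=\pm L$ and does not increase the $\partial_{x_2}$ part of the Dirichlet energy (an $L^2$-contraction property of rearrangement); these are standard but need to be said. The paper's route avoids these checks entirely.

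The genuine gap is in identifying the limits $\underline{u}$ and $\bar{u}$ as global minimizers of $I$, and also in the normalization step. You propose a cut-and-paste competitor construction and describe balancing gluing errors against bulk energy gain as ``the delicate step on which the whole argument rests,'' but you never carry it out. The paper's actual tool, both for ruling out drift of the reference point $a_n$ toward $\pm n$ (Lemma~\ref{lemma_ref}) and for showing $I(\underline{u})=I(\bar{u})=I(\phi)=I(\varphi)$ in Step 4, is the \emph{Hamiltonian identity}: the quantity $\int_0^1\bigl(\tfrac12(|\partial_{x_2}w|^2-|\partial_{x_1}w|^2)-F(w)\bigr)\,dx_2$ is constant in $x_1$ for any solution on the strip. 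Evaluating it at $x_1=\pm n\mp a_n$ (where $u_n$ equals $\phi$ or $\varphi$) and letting $x_1\to\pm\infty$ (where $\partial_{x_1}u\to 0$) immediately yields $I(\underline{u})\le I(\phi)$ and $I(\bar{u})\le I(\varphi)$, hence equality since $\phi,\varphi$ are global minimizers. It also yields $\partial_{x_1}w(0,\cdot)\equiv 0$ in the drift argument, to be contradicted via Hopf's lemma. Without the Hamiltonian identity you would have to redo both steps by energy comparison. The minimizer-identification step can in fact be closed by a careful comparison (insert a transition from $u_L(M,\cdot)\approx\bar u$ to $\varphi$ of bounded cost, deduce $\int_M^\infty(I(u(x_1,\cdot))-m)\,dx_1<\infty$, and conclude $I(\bar u)=m$), but the drift lemma is considerably less amenable to a pure cut-and-paste and is not a ``short argument.'' As written, your proposal names the obstacle but does not resolve it.
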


There are three remarks in order. 
\begin{remark}
The existence of heteroclinic solutions for elliptic PDEs is a question that has been intensively investigated in \cite{Ra94,Ra02} in a periodic setting: that is, the limit functions $\phi$ and $\varphi$ are periodic in the $x_N$ variable. Ours is a particular case of this study, since our limit functions are independent of $x_N$; however, we also obtain monotonicity of the solution in the $x_1$ variable, which is important in our application to the stationary solution of the 2D Euler equations. 
For further results in cylinders we refer also to~\cite{A19, BMR16, Ki82,Ra04}.
\end{remark}

\begin{remark}
We note that when the domain $\Omega=\mbR^2$ or $\mbR^2_{+}$,  similar problems to \eqref{slee}  with boundary conditions  replaced by possible uniform limit conditions at infinity in $x_2$ 
 have been studied intensively before.  In contrast to Proposition \ref{pro_scheme},  the solutions in $\Omega=\mbR^2$ or $ \mbR^2_{+}$  often enjoy one-dimensional symmetry  (see, e.g.  \cite{{BNV_CPAM_1994}, {BCN_ASNSP_1997}, {bbg}}), mainly due to the failure of {\bf Assumption \ref{two_ends}}  when the interval $(0,1)$ is replaced by unbounded ones. However,  regarding  some systems of PDEs  for which  there are two distinct global minimizers of the associated energy functional defined on $\mbR,$
there may exist a heteroclinic solution in $\mbR^2$  connecting the two global minimizers, very much like the solution in Proposition \ref{pro_scheme} (see \cite{abg}).  
 
\end{remark}

\begin{remark}
In fact, in the case $\phi=0$, Proposition \ref{pro_scheme} is exactly \cite[Theorem 1.1]{DR_arxiv_2024}. The proof of the general case follows the same lines as that of \cite[Theorem 1.1]{DR_arxiv_2024}.
\end{remark}

\begin{proof}[Proof of Proposition \ref{pro_scheme}] The proof is divided in several steps.

\medskip

{\bf Step 1. Approximating solutions.} Let $\Omega_n=(-n,n)\times(0,1)$ and define
\[
\mcX(\Omega_n)=\{u\in H^1(\Omega_n): u(\cdot,0)=0, u(\cdot,1)=c, u(-n,\cdot)=\phi, u(n,\cdot)=\varphi \}.
\]
Clearly, $\mcX(\Omega_n)$ is nonempty since it contains $\frac{x_1}{2n}(\varphi(x_2)-\phi(x_2))+\frac{1}{2}(\varphi(x_2)+\phi(x_2))$.
Define
\[
J_n(u)=\int_{\Omega_n}\left(\frac12|\nabla u|^2-F(u)\right)\,dx,\quad u\in \mcX(\Omega_n).
\]
Consider the minimization problem
\[
e_n\vcentcolon=\inf_{u\in \mcX(\Omega_n)}J_n(u).
\]
Obviously, $e_n<\infty$. It is also not difficult to see that $e_n>-\infty$ since it holds for every $u\in \mcX(\Omega_n)$ that
\begin{align*}
J_n(u)\ge& \int_{-n}^{n}\int_0^1\left(\frac12|\partial_{x_2} u|^2-F(u)\right) \,dx_2 \,dx_1\\
=&\int_{-n}^{n}I(u(x_1,\cdot)) \,dx_1\ge 2n I(\phi).
\end{align*}

Let $\{\tilde{u}_{n,k} \}_{k\in\mbN}\subset \mathcal{X}(\Omega_n)$ be a minimizing sequence of $J_n$. Define
\begin{eqnarray*}
u_{n,k}=\left\{\begin{aligned}
&\phi, & if\ \tilde{u}_{n,k}\le\phi,\\
&\tilde{u}_{n,k}, & if\ \phi<\tilde{u}_{n,k}<\varphi,\\
&\varphi, & if\ \tilde{u}_{n,k}\ge\varphi.
\end{aligned}\right.
\end{eqnarray*}
We claim that $\{u_{n,k} \}_{k\in\mbN}\subset \mathcal{X}(\Omega_n)$ is also a minimizing sequence of $J_n$. For this, we first observe that
\begin{align*}
J_n(\phi)-J_n(\phi\wedge \tilde{u}_{n,k})
\le
\int_{-n}^{n} \left( I(\phi)- I(\phi\wedge \tilde{u}_{n,k}(x_1,\cdot)) \right) \,dx_1
\le 0,
\end{align*}
where we used the notation $a\wedge b=\min\{a,b\}$.
Similarly, denoting $a\vee b=\max\{a,b\}$, we have
\begin{align*}
J_n(\varphi)-J_n(\varphi\vee \tilde{u}_{n,k})\le 0.
\end{align*}
Consequently,
\begin{align*}
J_n(u_{n,k})=&\int_{\{v_{n,k}\le\phi \}}\left(\frac12|\phi'(x_2)|^2-F(\phi(x_2))\right)\,dx
+\int_{\{\phi<\tilde{u}_{n,k}<\varphi \}}\left(\frac12|\nabla \tilde{u}_{n,k}|^2-F(\tilde{u}_{n,k})\right) \,dx\\
&+\int_{\{\tilde{u}_{n,k}\ge\varphi \}}\left(\frac12|\varphi'(x_2)|^2-F(\varphi(x_2))\right)\,dx\\
=& J_n(\tilde{u}_{n,k})+J_n(\phi)-J_n(\phi\wedge \tilde{u}_{n,k})+J_n(\varphi)-J_n(\varphi\vee \tilde{u}_{n,k})\\
\le & J_n(\tilde{u}_{n,k}).
\end{align*}
This finishes the proof of the claim.

Since $\phi\leq u_{n,k}\leq \varphi$,  $\{u_{n,k} \}_{k\in\mbN}$ is a uniformly bounded minimizing sequence of $J_n$. We infer that $\|u_{n,k}\|_{H^1(\Omega_n)}$ is uniformly bounded in $k$. So there exists some $u_n\in \mathcal{X}(\Omega_n)$ such that, up to a subsequence, $\{u_{n,k}\}$ converges weakly to $u_n$ in $H^1$, and $\{u_{n,k}\}$ converges pointwise to $u_n$. By the weak lower semicontinuity of the $H^1$-norm and the dominated convergence theorem, we conclude that $u_n$ is a minimizer such that $e_n=J(u_n)$.
In particular, $u_n$ is a solution to
\begin{eqnarray}\label{approx_equ}
\left\{\begin{aligned}
&-\Delta u_n=f(u_n), & x\in \Omega_n,\\
&u_n(x_1,0)=0, & -n\le x_1\le n,\\
&u_n(x_1,1)=c, & -n\le x_1\le n,\\
&u_n(-n,x_2)=\phi(x_2), & 0\le x_2\le 1,\\
&u_n(n,x_2)=\varphi(x_2), & 0\le x_2\le 1.
\end{aligned}\right.
\end{eqnarray}

Observe that  $\phi\le u_n\le \varphi$, by pointwise convergence. By the elliptic regularity estimates and a standard reflection argument (see, e.g., \cite[Proposition 2.2]{DR_arxiv_2024}), we have that
\begin{align}\label{uniform_bound}
\sup_{n\ge1}\|u_n\|_{C^{2,\alpha}(\overline{\Omega_{n}})}<\infty.
\end{align}

{\bf Step 2. Monotonicity.} We show that $\partial_{x_1}u_n>0$ in $\Omega_n$. Note that $\partial_{x_1}u_n$ is a weak solution to
\begin{eqnarray*}
\left\{\begin{aligned}
&-\Delta(\partial_{x_1}u_n)=f'(u_n)(\partial_{x_1}u_n), & {\rm in}\ \Omega_n,\\
&\partial_{x_1}u_n\ge 0, & {\rm on}\ \partial\Omega_n,
\end{aligned}\right.
\end{eqnarray*}
where the boundary condition follows from the boundary condition of $u_n$ and the fact that $\phi\le u_n\le \varphi$ in $\Omega_n$.

Since $u_n$ is a global minimizer of $J_n$, the principal eigenvalue $\lambda_1$ of the operator $-\Delta-f'(u_n)$, supplemented with homogeneous Dirichlet boundary condition, is nonnegative. If $\lambda_1>0$, the maximum principle holds for $-\Delta-f'(u_n)$ (see \cite{BNV_CPAM_1994}), that is, either $\partial_{x_1}u_n>0$ in $\Omega_n$ or $\partial_{x_1}u_n\equiv0$. But the latter cannot occur since $u_n(-n,\cdot)=\phi<\varphi=u_n(n,\cdot)$ in $(0,1)$. If $\lambda_1=0$, by \cite[Corollary 2.2]{BNV_CPAM_1994}, $\partial_{x_1}u_n$ is a constant multiple of the eigenfunction $\phi_1$ corresponding to $\lambda_1$. Since $\phi_1>0$ in $\Omega_n$ and $u_n(-n,\cdot)<u_n(n,\cdot)$ in $(0,1)$, we still obtain that $\partial_{x_1}u_n>0$ in $\Omega_n$.

{\bf Step 3. Picking a reference point.} Note that any horizontal translation of a solution to \eqref{slee} is still a solution. Because of this, $u_n$ could converge to a trivial limit. To avoid this, we need to fix a point, say $(0,1/2)$,  at which every approximating solution takes a common value strictly between $\phi(1/2)$ and $\varphi(1/2)$.

For every $n$, by the strict monotonicity of $u_n(\cdot,1/2)$, there exists a unique number $a_n\in(-n,n)$ such that
\begin{align*}
u_n(a_n,1/2)=\frac12(\phi(1/2)+\varphi(1/2)).    
\end{align*}

The key ingredient of this step is to prove the following lemma.
\begin{lemma}\label{lemma_ref}
It holds that $\lim_{n\rightarrow\infty}(n-a_n)=+\infty$ and $\lim_{n\rightarrow\infty}(n+a_n)=+\infty$.
\begin{proof}
We argue by contradiction and assume that (up to a subsequence) $\lim_{n\rightarrow\infty}(n-a_n)=l$ for some $l\in[0,\infty)$. By \eqref{uniform_bound} and the Arzel\`a-Ascoli theorem, $w_n(x_1,x_2)=u_n(x_1+n,x_2)$ converges locally in $C^{2,\alpha}$ to a limit $w$ which solves
\begin{eqnarray*}
\left\{\begin{aligned}
&-\Delta w=f(w), & x\in (-\infty,0)\times(0,1),\\
&w(x_1,0)=0, & x_1\in (-\infty,0),\\
&w(x_1,1)=c, & x_1\in (-\infty,0),\\
&w(0,x_2)=\varphi(x_2), & x_2\in[0,1].
\end{aligned}\right.
\end{eqnarray*}
Moreover, $w$ satisfies that $\partial_{x_1}w\ge0$ and $w(-l,1/2)=\frac12(\phi(1/2)+\varphi(1/2))<\varphi(1/2)$. By maximum principle, $\partial_{x_1}w<0$ in $(-\infty,0)\times(0,1)$. We can define
\[
w_-(x_2)=\lim_{x_1\rightarrow-\infty}w(x_1,x_2).
\]

Observe, for later use, that the above limit holds in $C^1$ sense. Next, let us recall the Hamiltonian identity (see \cite{G_JFA_2008})
\[
\int_0^1 \frac12(|\partial_{x_2}w|^2-|\partial_{x_1}w|^2)-F(w) \,dx_2=const \quad \forall x_1\in (-\infty,0].
\]
Taking $x_1=0$ and letting $x_1\rightarrow-\infty$, respectively, we get
\begin{align*}
I(w_-)=I(\varphi)-\frac12\int_0^1 |\partial_{x_1}w(0,x_2)|^2\,dx_2.
\end{align*}
Since $I(w_-)\ge I(\varphi)$, we infer that
\begin{align*}
\int_0^1 |\partial_{x_1}w(0,x_2)|^2\,dx_2=0,
\end{align*}
and consequently, $\partial_{x_1}w(0,x_2)\equiv0$. 

On the other hand, $\tilde{w}\vcentcolon=\varphi-w$ satisfies
\begin{eqnarray*}
\left\{\begin{aligned}
&-\Delta \tilde{w}=q(x)\tilde{w}, & {\rm in}\ (-\infty,0)\times(0,1),\\
&\tilde{w}>0, & {\rm in}\ (-\infty,0)\times(0,1),\\
&\tilde{w}=0, & {\rm on}\ \partial((-\infty,0)\times(0,1)),
\end{aligned}\right.
\end{eqnarray*}
where
\begin{eqnarray*}
q(x)=\left\{\begin{aligned}
&\frac{f(\varphi(x))-f(w(x))}{\varphi(x)-w(x)}, & {\rm if}\ \varphi(x)\neq w(x),\\
&0, & {\rm if}\ \varphi(x)= w(x)
\end{aligned}\right.
\end{eqnarray*}
is a bounded function. By Hopf lemma, we have $-\partial_{x_1}v(0,x_2)=\partial_{x_1}w(0,x_2)<0$ for $x_2\in (0,1)$. This leads to a contradiction.

A similar argument can prove the second part that $\lim_{n\rightarrow\infty}(n+a_n)=+\infty$.
\end{proof}
\end{lemma}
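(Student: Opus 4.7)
My plan is to argue by contradiction and extract a non-trivial monotone limit on a half-strip that contradicts the minimality of $\varphi$. Suppose that along some subsequence $n-a_n\to l\in[0,\infty)$. Translating by $n$, set $w_n(x_1,x_2)=u_n(x_1+n,x_2)$ on $(-2n,0)\times(0,1)$. The uniform bound \eqref{uniform_bound} together with Arzel\`a--Ascoli produce (along a further subsequence) a $C^{2,\alpha}_{\rm loc}$ limit $w$ on $(-\infty,0)\times(0,1)$ solving $-\Delta w=f(w)$, with $w(\cdot,0)=0$, $w(\cdot,1)=c$, $w(0,\cdot)=\varphi$, $\partial_{x_1}w\ge 0$, and $w(-l,1/2)=(\phi(1/2)+\varphi(1/2))/2<\varphi(1/2)$, so $w\not\equiv\varphi$. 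Monotonicity and boundedness give a pointwise limit $w_-(x_2):=\lim_{x_1\to-\infty}w(x_1,x_2)$, and standard interior Schauder estimates upgrade this convergence to $C^1$ in $x_2$, so $w_-$ solves the ODE \eqref{ode}.

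Next I would apply the Hamiltonian identity from \cite{G_JFA_2008}:
\[
H(x_1):=\int_0^1\Bigl(\tfrac12\bigl(|\partial_{x_2}w|^2-|\partial_{x_1}w|^2\bigr)-F(w)\Bigr)\,dx_2
\]
is independent of $x_1$ on $(-\infty,0]$. Comparing its values at $x_1=0$ and in the limit $x_1\to-\infty$ (where $\partial_{x_1}w\to 0$) yields
\[
I(w_-)=I(\varphi)-\tfrac12\int_0^1|\partial_{x_1}w(0,x_2)|^2\,dx_2.
\]
Since $\varphi$ is a global minimizer of $I$, we have $I(w_-)\ge I(\varphi)$, so $\partial_{x_1}w(0,\cdot)\equiv 0$.

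To reach a contradiction, I would compare $w$ with $\varphi$ via $\tilde w:=\varphi-w\ge 0$. This function vanishes on $\{x_1=0\}\cup\{x_2=0\}\cup\{x_2=1\}$, is not identically zero (as $\tilde w(-l,1/2)>0$), and satisfies a linear equation $-\Delta\tilde w=q(x)\tilde w$ with bounded $q$ given by the difference quotient of $f$. The strong maximum principle forces $\tilde w>0$ in the interior, and Hopf's lemma at any interior point of $\{x_1=0\}$ then yields $\partial_{x_1}\tilde w(0,x_2)<0$, i.e.\ $\partial_{x_1}w(0,x_2)>0$, contradicting what the Hamiltonian identity gave. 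This proves $n-a_n\to+\infty$, and the assertion $n+a_n\to+\infty$ follows by the symmetric argument: translate by $-n$ and invoke the minimality of $\phi$ instead.

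The main obstacle is ensuring that the convergence $w(x_1,\cdot)\to w_-$ as $x_1\to-\infty$ is strong enough to pass to the limit in $H(x_1)$. This should follow from interior elliptic estimates applied to the translated family $w(\cdot+T,\cdot)$ on a unit window, plus the fact that $\partial_{x_1}w\to 0$ in $C^1_{\rm loc}$; the resulting convergence in $C^1([0,1])$ suffices to pass to the limit in both the kinetic and potential parts. A secondary care point is checking that Hopf's lemma applies on the flat face $\{x_1=0\}\times(0,1)$ away from the corners, which is standard once one stays inside an interior ball tangent to that face.
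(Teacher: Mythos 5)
Your argument is correct and follows the same approach as the paper: translation and compactness to obtain a half-strip limit $w$, the Hamiltonian identity to force $\partial_{x_1}w(0,\cdot)\equiv 0$ from minimality of $\varphi$, and then Hopf's lemma applied to $\tilde{w}=\varphi-w$ to derive the contradiction. Your closing remark that the second limit follows by translating by $-n$ and invoking minimality of $\phi$ is precisely what the paper's ``similar argument'' phrase points to.
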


{\bf Step 4. Passing to the limit.}
Define $w_n(x_1,x_2)=u_n(x_1+a_n,x_2)$. Remember that 
\[
w_n(0,1/2)=\frac12(\phi(1/2)+\varphi(1/2)).
\]
By \eqref{approx_equ}, \eqref{uniform_bound}, Lemma \ref{lemma_ref}, and Arzel\`a-Ascoli theorem, we get that $w_n$ (up to a subsequence) converges in $C_{loc}^{2}(\overline{\Omega_\infty})$ to some limit $u$ that solves \eqref{slee}. Moreover, $u$ satisfies that $\partial_{x_1}u\ge0$, $\phi\le u\le\varphi$, and
\begin{align}\label{u_btw1}
\phi(1/2)<u(0,1/2)=\frac12(\phi(1/2)+\varphi(1/2))<\varphi(1/2).   
\end{align}

Let us define
\[
\underline{u}(x_2)=\lim_{x_1\rightarrow-\infty}u(x_1,x_2),\quad
\bar{u}(x_2)=\lim_{x_1\rightarrow+\infty}u(x_1,x_2).
\]
Clearly, $\underline{u}$ and $\bar{u}$ are two solutions to \eqref{ode}, and they satisfy
\begin{align}\label{u_btw2}
\phi\le \underline{u}\le u\le \bar{u}\le\varphi.
\end{align}

Next, for every $n$, we get again from the Hamiltonian identity that
\begin{align*}
I(\phi)=I(\varphi)
\ge\int_0^1 \frac12(|\partial_{x_2}w_n|^2-|\partial_{x_1}w_n|^2)-F(w_n) \,dx_2
\end{align*}
for all $x_1\in[-n-a_n,n-a_n]$.
Letting $n\rightarrow\infty$, then $x_1\rightarrow\pm\infty$, we arrive at
\begin{align*}
I(\phi)=I(\varphi)
\ge I(\underline{u})=I(\bar{u}).
\end{align*}
But $\phi$ and $\varphi$ are global minimizes of $I$, hence
\begin{align*}
I(\phi)=I(\varphi)=I(\underline{u})=I(\bar{u}).
\end{align*}
This together with \eqref{u_btw1}, \eqref{u_btw2} and Assumption \ref{two_ends} implies that $\underline{u}=\phi$ and $\bar{u}=\varphi$. Finally, the maximum principle implies that $\partial_{x_1}u>0$ in $\Omega_\infty$.

Hence the proof of Proposition \ref{pro_scheme} is completed.
\end{proof}


\section{Proof of Theorem \ref{main theorem}}\label{sec_pf1}

This section is devoted to the proof of Theorem \ref{main theorem}. For this, we apply Proposition \ref{pro_scheme} and the following proposition where we find a nonlinearity $f$ for which Assumption \ref{two_ends} is satisfied.

\begin{pro}\label{pro_findf}
There exist a nonlinearity $f\in C^\infty$ and a constant $c>0$ such that Assumption \ref{two_ends} is satisfied. Moreover, $\phi'>0$ in $[0,1]$, and there exists a number $t_1\in (0,1)$ such that $\varphi'>0$ in $[0,t_1)$ and $\varphi'<0$ in $(t_1,1]$.
\end{pro}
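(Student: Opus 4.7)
The strategy is to exploit the Hamiltonian first integral of the Euler--Lagrange ODE $-\psi'' = f(\psi)$, namely $\tfrac12(\psi')^2 + F(\psi) = E_\psi$, to recast the construction as a phase-plane matching problem, and then to pick an explicit smooth $F$ (together with a constant $c > 0$) for which the two distinguished trajectories $\phi$ and $\varphi$ have equal, minimal action, with no admissible competitor strictly between them.

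First I would translate the required profiles into phase-plane conditions. A strictly increasing $\phi$ with $\phi(0)=0$, $\phi(1)=c$ corresponds to an energy $E_\phi > \max_{[0,c]} F$ satisfying the period condition
\begin{equation*}
\int_0^c \frac{ds}{\sqrt{2(E_\phi - F(s))}} = 1,
\end{equation*}
while a unimodal $\varphi$ with interior maximum $M > c$ corresponds to the energy $E_\varphi = F(M)$ with $F(s) < F(M)$ on $[0,M)$ and
\begin{equation*}
\int_0^{M} \frac{ds}{\sqrt{2(F(M) - F(s))}} + \int_c^{M} \frac{ds}{\sqrt{2(F(M) - F(s))}} = 1.
\end{equation*}
Via the Hamiltonian identity, the action reduces to $I(\psi) = \int_0^1 (\psi')^2\,dx_2 - E_\psi$, expressible entirely in terms of integrals of $\sqrt{E - F}$. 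Note that $\phi' > 0$ on the closed interval $[0,1]$ is automatic from $E_\phi > \max_{[0,c]} F$, and the unimodality of $\varphi$ with a unique interior peak follows from $F < F(M)$ on $[0,M)$.

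Next I would take a parameterized family of $C^\infty$ potentials $F$, shaped like a deep well on $[0,c]$ followed by a bump peaking at $M > c$, with enough free parameters (well depth, peak height and location, value of $c$) to simultaneously enforce the two period conditions and the matching-action equation $I(\phi) = I(\varphi)$. These three scalar constraints can be solved by a continuity/intermediate-value argument on one of the parameters, producing $f = F' \in C^\infty$, $c > 0$, and the two candidate minimizers with the prescribed profiles.

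Finally, I would verify Assumption \ref{two_ends}. Global minimality of both $\phi$ and $\varphi$ follows from a phase-plane comparison with any admissible function (using the Hamiltonian identity and the monotone rearrangement in the value variable). Absence of an intermediate global minimizer requires classifying all period-one trajectories of the ODE lying in the band $\phi \le \eta \le \varphi$. The main obstacle, and the technical heart of the proof, is this last step: $F$ must be tailored so that the period map $E \mapsto \int_0^c ds/\sqrt{2(E - F(s))}$ is strictly monotone on the relevant energy range, and so that no unimodal loop of period $1$ sits strictly inside the band. This imposes a delicate quantitative balance on the well depth and the bump height of $F$; once it is achieved, the prescribed monotonicity of $\phi$ and the unimodality of $\varphi$ read off directly from the phase portrait of the chosen nonlinearity.
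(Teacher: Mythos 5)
Your phase-plane/period-map approach is a genuinely different route from the paper. The paper instead takes a one-parameter family $F_\lambda(s)=\chi(s)^3-\lambda\chi(s)^4$ with $\chi$ a smooth cutoff vanishing on $(-\infty,1]$, so that $\phi(t)=t$ is \emph{automatically} a nondegenerate solution with $I_\lambda(\phi)=1/2$ for every $\lambda$. It then defines $m_\lambda=\inf I_\lambda$, shows $m_\lambda$ is nondecreasing in $\lambda$, strictly below $1/2$ for small $\lambda$, and equal to $1/2$ for large $\lambda$; at the threshold $\lambda^*$ it extracts a second global minimizer $\varphi^*\neq\phi$ as a limit of minimizers $\varphi_n$ for $\lambda_n\uparrow\lambda^*$, and finally passes to the minimal minimizer above $\phi$ via a totally-ordered-set-of-minima argument.

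The significant gap in your proposal is global minimality. Having two trajectories $\phi,\varphi$ of the ODE that satisfy the period constraints and have equal action $I(\phi)=I(\varphi)$ does not, by itself, make either one a \emph{global} minimizer of $I$ over $\mathcal{H}_{0,c}(0,1)$. Your assertion that this ``follows from a phase-plane comparison with any admissible function (using the Hamiltonian identity and the monotone rearrangement in the value variable)'' is not substantiated and is not a standard fact: the Hamiltonian identity holds only along solutions of the ODE, not along arbitrary competitors, and monotone rearrangement on $(0,1)$ does not in general respect the two-sided boundary conditions $\psi(0)=0,\ \psi(1)=c$ nor lower $-\int F(\psi)$ when $F$ changes sign. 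The paper avoids ever having to \emph{verify} minimality of a prescribed trajectory: $\varphi^*$ inherits minimality by being a limit of exact minimizers at nearby parameters, and $\phi$ is a minimizer precisely because $m_{\lambda^*}=1/2=I_{\lambda^*}(\phi)$. You have correctly flagged the absence of an intermediate minimizer as a delicate point; but even granting that, the minimality of $\phi$ and $\varphi$ would still need an argument, and that is where your outline most visibly falls short of a proof. A secondary concern is that solving your three simultaneous constraints (two period conditions and an action-matching equation) over a multiparameter family of $C^\infty$ potentials by an IVT argument is plausible but also left entirely unjustified, whereas the paper reduces this to a one-parameter bifurcation in $\lambda$ with monotone $m_\lambda$, which is much easier to close.
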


\begin{proof}

Our argument is inspired by the proof of   \cite[Proposition 3.1 and Lemma 3.2]{DR_arxiv_2024}, but new ingredients are necessary. The proof is divided into five steps.

{\bf Step 1.}
To start with, define $\chi:\R \to [0,+\infty)$ a $C^\infty$ function satisfying
\begin{equation}
	 \chi(s)=0\,\,\text{if}\,\, s \leq 1,
 \quad s-1 \geq \chi(s) \geq s-2\,\,\text{ for all } s \ge 1,
\end{equation}
and
\begin{equation}
\chi'(s) > 0\,\,\text{ for all }s >1.
\end{equation}

 Let
\[
F_{\lambda}(s)=\chi(s)^3-\lambda \, \chi(s)^4
\]
for any $\lambda \geq 0$ to be determined, and
\[
f_{\lambda}(s)= F_{\lambda}'(s)= \chi'(s)(3 \chi(s)^2 - 4 \lambda \chi(s)^3).
\]

We will work in the function space $\mcH_{0,1}(0,1)$:
\[
\mcH_{0,1}(0,1):=\{\psi\in H^{1}(0,1): \psi(0)=0, \psi(1)=1  \}.\]
Define a functional $I_\lambda: \mcH_{0,1}(0,1) \rightarrow \R$ as
\[
I_\lambda(\psi)=\int_{0}^{1}\left(\frac12|\psi'(t)|^2-F_{\lambda}(\psi(t))\right)\,dt.
\]

It is clear that critical points of $I_\lambda$ correspond to solutions of the Euler-Lagrange equation:

\begin{eqnarray}\label{ode-bis}
	\left\{\begin{aligned}
		&-\psi''=f_{\lambda}(\psi), & {\rm in}\ (0,1),\\
		&\psi(0)=0, \psi(1)=1.
	\end{aligned}\right.
\end{eqnarray} Observe that $\phi(t)=t$ is a solution of \eqref{ode-bis} for any $\lambda \geq 0$. Moreover, $\phi$ is a nondegenerate local minimum of $I_{\lambda}$, since for any $\xi \in H_0^1(0,1)$,
\begin{equation} \label{nondeg} 
I_{\lambda}''(\phi)[\xi, \xi]= \int_0^1 \xi'(t)^2 \, dt \geq C_p \| \xi\|_{H_0^1}^2,
\end{equation}
where $C_p>0$ is an absolute constant and we have used the Poincar\'e inequality. We have the following lemma. 

\begin{lemma} \label{lemaphi} If $\psi\neq \phi$ is another solution of \eqref{ode-bis} for some $\lambda>0$, then the following two properties hold.
	\begin{enumerate}
		\item[a)] $\psi(t) > \phi(t) $ for any $t \in (0,1)$.
	\item[b)] $\psi(t)$ is concave and achives its maximum, which is bigger than $1$.
	\end{enumerate}
\end{lemma}
\begin{proof}[Proof of Lemma \ref{lemaphi}]
In order to prove Part $a$), take $t_0 \in (0,1)$ so that 
		$$\psi(t_0) - \phi(t_0) = \min\{ \psi(t) - \phi(t): \ t \in [0,1]\} ,$$ and assume that this minimum is nonpositive. Observe that $\psi'(t_0)=1$ and $\psi(t_0) < 1$ so that $f_{\lambda}(\psi(t)) =0$ in a neighborhood of $t_0$. As a consequence $\psi(t)= t + \psi(t_0)- t_0$ in a neighborhood of $t_0$, but since all those values are below $1$, it turns out that $\psi(t)= t + \psi(t_0)- t_0$ for all $t \in [0,1]$. By the boundary conditions, we get $\psi(t)=t$, which contradicts $\psi\neq \phi$. 
	
	We now show Part $b$). First, if $\psi(t) \leq 1$ for all $t \in [0,1]$, then $f_{\lambda}(\psi(t))=0$ and hence $\psi$ is linear, so that $\psi=\phi$. Then, take $t_1 \in (0,1)$ so that $\psi(t_1)= \max \{ \psi(t): \ t \in [0,1]\} >1$.
	
	Observe now that there exists an $s(\lambda)>0$ such that  $f_\lambda(s) < 0$ for all $s >s(\lambda)$ and $f_\lambda(s) \geq 0$ for all $s \leq s(\lambda)$. Since $\psi(t_1)$ is a maximum, one has $f_\lambda(\psi(t_1))=-\psi''(t_1) \ge 0$. As a consequence $\psi(t_1) \leq s(\lambda)$ and hence $\psi$ is concave.
\end{proof}

 Define
\[
m_\lambda=\inf_{\mcH_{0,1}(0,1)}I_\lambda.
\]
One easily sees that $m_\lambda\le 1/2$ for all $\lambda\ge0$ since $I_\lambda(\phi)=1/2$.

{\bf Step 2.} 
For every $\lambda>0$, we have $-\infty<m_\lambda\le 1/2$, and $m_\lambda$ is achieved. Moreover, $m_\lambda\le m_{\lambda'}$ for $\lambda\le \lambda'$.

For all $\lambda>0$, since $s-1 \geq \chi(s) \geq s-2$ for all $s \ge1$, we have that $F_{\lambda}(t) \leq k_\lambda$ for some constant $k_\lambda \in \R$. Therefore, 
\begin{equation}
	\label{coercivity} I_{\lambda}(\psi) \geq 
	\int_{0}^{1}\left( \frac12|\psi'(t)|^2-k_{\lambda}\right) \, dt,
\end{equation}
and coercivity easily follows. Then the weak lower semicontinuity is a consequence of the fact that $F_\lambda\leq k_\lambda$ and Fatou's Lemma. Finally, the monotonicity of $m_\lambda$ with respect to $\lambda$ is immediate from its definition.

{\bf Step 3.} 
The set $\mathcal{E}=\{\lambda>0:m_\lambda< 1/2 \}$ is not empty and bounded from above, so we may denote by $\lambda^*$ its supremum.

We first show that $\mathcal{E}$ is not empty. Indeed, take any $w \in H_0^1(0,1)$, $v>0$, and define $\psi_\mu= \phi + \mu w \in \mcH_{0,1}(0,1)$. For sufficiently large $\mu >0$, we have that
\begin{align*}
I_0(\psi_{\mu})=& 
 \int_0^1 \left( \frac{1}{2} (1+ \mu w'(t))^2 - \chi(t + \mu w(t))^3 \right) \,dt\\ 
\leq& \int_0^1 \left[ \frac{1}{2} (1+ \mu w'(t))^2 - ( \mu w(t)-2)^3 \right]\,dt  <-1.
\end{align*}
Once $\mu$ is fixed, we can take $\lambda>0$ small enough so that $\lambda \int_{0}^1  \chi(\psi_{\mu}(t))^4 \, dt <1$ to conclude that $I_{\lambda}(\psi_{\mu})<0$. 

Let us now show that $I_{\lambda}(\psi) \geq 1/2$ for sufficiently large $\lambda$. Define $\mathcal{A}= \{t \in [0,1]: \ \psi(t) \leq 1\}$ and ${\mathcal{B}}= \{t \in [0,1]: \ \psi(t) > 1\}$. Then by Poincar\'e inequality, we obtain
$$
I_{\mathcal{A}}\vcentcolon=\int_{\mathcal{A}}\left(\frac12|\psi'(t)|^2-F_{\lambda}(\psi(t))\right)\,dt = \int_{\mathcal{A}} \frac12|\psi'(t)|^2 \geq \int_0^a \frac12|\psi'(t)|^2 \geq 1/2.
$$  
Above we denote $a$ such that $[0,a] \subset {\mathcal{A}}$ with $\psi(a)=1$. 

We now consider the integral in the set ${\mathcal{B}}$, assuming that ${\mathcal{B}} \neq \emptyset$. We apply Poincar\'{e} inequality to $\psi-1$ in ${\mathcal{B}}$: this is possible because $\psi=1$ on $\partial {\mathcal{B}}$.  Then:
\begin{align*}
I_{\mathcal{B}}\vcentcolon=&\int_{\mathcal{B}} \left(\frac12|\psi'(t)|^2-F_{\lambda}(\psi(t))\right)\,dt\\
\geq &\int_{\mathcal{B}}\left(\frac{C}{2}|\psi(t)-1|^2-F_{\lambda}(\psi(t))\right)\,dt\\
\geq &\int_{\mathcal{B}} \left( \frac{C}{2} \chi(\psi(t))^2- \chi(\psi(t))^3 + \lambda \chi(\psi(t))^4 \right)\,dt.
\end{align*}

Observe that $C$ above can be taken independently of the set $\mathcal{B}$ since $\mathcal{B} \subset [0,1]$. If $\lambda$ is sufficiently large, the function $ g(s):= \frac{C}{2} s^2- s^3 + \lambda  s^4$ is nonnegative, and hence $I_{\mathcal{B}}\geq 0$, which implies that
$$I_{\lambda}(\psi) = I_{\mathcal{A}}+I_{\mathcal{B}} \geq 1/2.$$

{\bf Step 4.} 
$m_{\lambda^*}=1/2$, and $I_{\lambda^*}$ has two global minimizers $\phi$ and $\varphi \in \mcH_{0,1}(0,1)$ (see Figure \ref{pic1}).

\begin{figure}[h]
	\centering 
	\begin{minipage}[c]{15cm}
		\centering
		\resizebox{12cm}{9cm}{\includegraphics{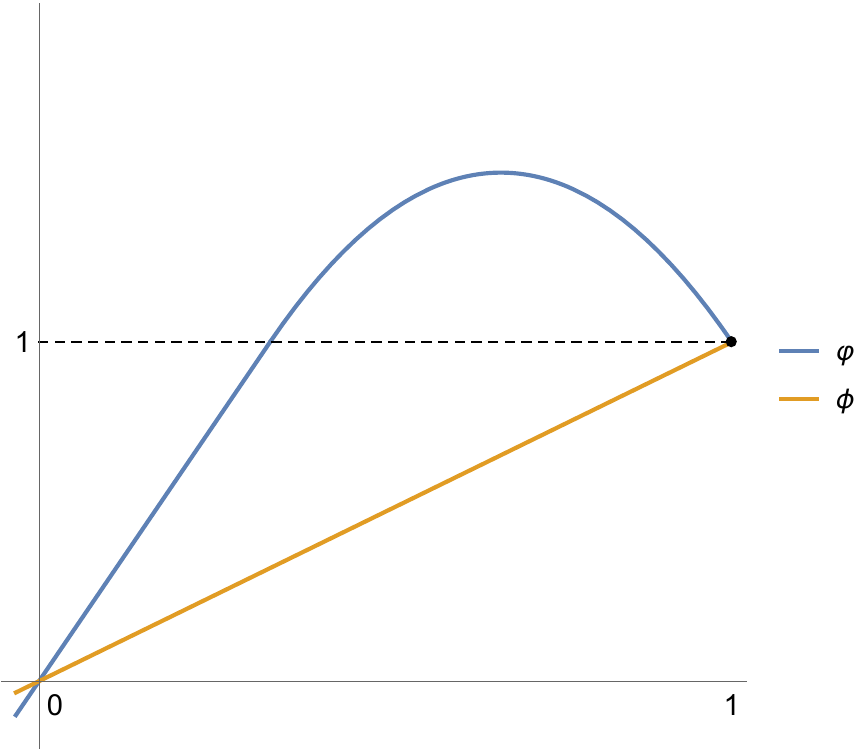}}
		\caption{The graphs of the functions $\phi$ and $\varphi$.}
        \label{pic1}
	\end{minipage}
\end{figure}

We start by pointing out that $m_{\lambda^*}=1/2$ by the definition of $\lambda^*$.
Consider a sequence $\{\lambda_n\}_{n\in\N}\subseteq\mathcal{E}$ such that $\lambda_n\to\lambda^*$ as $n\to+\infty$. Then there exists a sequence  $\{\varphi_{n}\}_{n\in\N}\subseteq \mcH_{0,1}(0,1)$ such that $I_{\lambda_n}(\varphi_{n})=m_{\lambda_n}<1/2$ for all $n\in\N$. 

Now, taking into account that $\lambda_n\to\lambda^*$, the inequality \eqref{coercivity} tells us that $\|\varphi_{\lambda_n}\|_{H^1}\le C$ for some constant $C>0$ and for all $n\in\N$. Then, up to subsequences, $\varphi_{n}\wto\varphi^*$ in $H^1(0,1)$ for some $\varphi^*\in \mcH_{0,1}(0,1)$. In particular, $\varphi_{n} \to\varphi^*$ uniformly, and hence $f_{\lambda_n}(\varphi_n) \to f_{\lambda^*}(\varphi^*)$ uniformly. By \eqref{ode-bis} we conclude that $\varphi_n \to \varphi^*$ in $H^1(0,1)$. This convergence, together with \eqref{nondeg}, implies that $\varphi^* \neq \phi$. Moreover $\varphi^*$ is a solution of \eqref{ode-bis} with $\lambda= \lambda^*$ and $I_{\lambda^*}(\varphi^*)=\frac12$.

\medskip 

{\bf Step 5.} For $\lambda= \lambda^*$, there exists $\varphi \in \mcH_{0,1}(0,1)$ a minimal minimizer above $\phi$: in other words:
\begin{enumerate}
	\item[a)] $\varphi > \phi$;
	\item[b)] $I_{\lambda^*}(\varphi)=1/2$;
	\item[c)] If $\psi \in \mcH_{0,1}(0,1)$ satisfies that $\psi >\phi$ and $I_{\lambda^*}(\psi)=1/2$, then $\psi \geq \varphi$.
\end{enumerate} 

The proof of this step is quite similar to \cite[Lemma 3.2]{DR_arxiv_2024}. Hence we just outline the key steps for completeness.

First, we claim that the set of global minima of $I_{\lambda^*}$ is totally ordered. To show this, assume $\psi_1,\psi_2\in \mcH_{0,1}(0,1)$ are two arbitrary minimizers of $I_{\lambda^*}$. Define $\xi_1=\min\{\psi_1,\psi_2 \}$ and $\xi_2=\max\{\psi_1,\psi_2 \}$. Note that
\[
I_{\lambda^*}(\xi_1)\ge\frac12, \
I_{\lambda^*}(\xi_2)\ge\frac12, \
I_{\lambda^*}(\xi_1)+I_{\lambda^*}(\xi_2)=
I_{\lambda^*}(\psi_1)+I_{\lambda^*}(\psi_2)=1.
\]
This implies $I_{\lambda^*}(\xi_1)=I_{\lambda^*}(\xi_2)=\frac12$, that is, $\xi_1$ and $\xi_2$ are also minimizers. Therefore, they both satisfy \eqref{ode-bis} with $\lambda=\lambda^*$ and are smooth in $(0,1)$. Clearly, $\xi_1$ coincides with $ \psi_j$ (for $j=1$ or $j=2$) in a set with nonempty interior: the uniqueness of solutions of the initial value problem implies that $\xi_1= \psi_j$ in the whole interval $(0,1)$. As a consequence, either $\xi_1\equiv \xi_2$ or  $\xi_1$ is strictly less than $\xi_2$ in the interval $(0, 1)$. Since $\psi_1$ and $\psi_2$ are chosen arbitrarily, the set of global minima of $I_{\lambda^*}$ is totally ordered.


Second, denote $\mathcal{M}$ the collection of all global minimizers of $I_{\lambda^*}$ above $\phi.$ Then there exists $\{\psi_n\}_{n\ge1}\subset\mathcal{M}$ with $\|\psi_n\|_{L^\infty}\rightarrow\inf_{\mathcal{M}}\|\psi\|_{L^\infty}$. The uniform boundedness of $\psi_n$ and the equation \eqref{ode-bis} imply that, up to a subsequence, $\psi_n\rightarrow\varphi$ in $C^{2,\alpha}$ sense. By \eqref{nondeg} and Lemma \ref{lemaphi}, we infer that $\varphi\in\mathcal{M}.$ By the argument in the previous step and the fact that $\|\varphi\|_{L^\infty}=\inf_{\mathcal{M}}\|\psi\|_{L^\infty}$, we finally conclude that $\varphi$ is the desired minimal solution above $\phi.$

 Finally, recalling Part b) of Lemma \ref{lemaphi}, we conclude the proof of Proposition \ref{pro_findf}.
\end{proof}

Theorem \ref{main theorem} easily follows from Propositions \ref{pro_scheme} and \ref{pro_findf}.

\begin{proof}[Proof of Theorem \ref{main theorem}]
By Proposition \ref{pro_scheme} and Proposition \ref{pro_findf}, there exists $f\in C^\infty$ such that \eqref{slee} (with $c=1$) has a bounded, strictly monotone and smooth solution $u$ such that
\begin{align*}
\lim_{x_1\rightarrow-\infty}u(x_1,x_2)=\phi(x_2),\quad
\lim_{x_1\rightarrow+\infty}u(x_1,x_2)=\varphi(x_2)
\end{align*}
uniformly in $x_2\in[0,1]$. 

Let $(\bv,P)$ be defined by \eqref{euler_solution}. Then $v_2>0$ in $\Omega_\infty.$ Since $\phi$ is increasing and $\varphi$ is concave and above $\phi,$ we observe that the level set $\{ u=1\}$ consists of the upper boundary of $\Omega_\infty$ and the graph of some function $x_2=h(x_1)$ defined on half line. Since any horizontal translation of $u$ is still a solution to \eqref{slee}, we can choose the domain of $x_2=h(x_1)$ to be $[0,\infty)$. Finally, by Hopf's lemma, we infer that $v_1(\cdot,0)=-\partial_{x_2}u(\cdot,0)<0,$ $v_1(\cdot,1)=-\partial_{x_2}u(\cdot,1)>0$ on $(0,\infty)$, and $v_1(\cdot,1)=-\partial_{x_2}u(\cdot,1)<0$ on $(-\infty,0)$.
\end{proof}


\section{Proof of Theorem \ref{theostable}}\label{sec_pf2}

This section is devoted to the proof of Theorem \ref{theostable}, which will be a consequence of Proposition \ref{pro_scheme} and the following proposition.
	
\begin{pro} 
\label{pro-stable} There exists a nonlinearity $f\in C^\infty$ such that Assumption \ref{two_ends} is satisfied for $c=0$. Moreover, both $\phi$ and $\varphi$ are strictly positive in $(0,1)$.
\end{pro}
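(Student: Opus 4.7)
The plan is to adapt the continuation argument of Proposition \ref{pro_findf}, replacing the role of the linear function $t \mapsto t$ by a non-trivial positive solution of a carefully chosen base equation. Throughout I work in $\mcH_{0,0}(0,1) = \{\psi \in H^{1}(0,1) : \psi(0) = \psi(1) = 0\}$ with energy $I_\lambda$ built from a potential $F_\lambda$.

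First, I would construct a base nonlinearity $f_0 \in C^\infty$ with support in $[\delta, M]$ for some $0 < \delta < M$, designed so that the Dirichlet problem $-\phi'' = f_0(\phi)$, $\phi(0) = \phi(1) = 0$, admits a positive solution $\phi$ with $\max \phi < M$ that is (i) the unique positive critical point of $I_0$ on $\mcH_{0,0}(0,1)$, (ii) a non-degenerate local minimum, and (iii) the global minimizer of $I_0$, with $\mathcal{E}_0 := I_0(\phi) < 0$. Such $f_0$ can be produced by phase-plane calibration: pick a smooth bump concentrated inside $(\delta, M)$, tune its shape so that the unique positive orbit in the $(\psi, \psi')$-plane has period exactly $1$, and scale the amplitude so that $\phi$ strictly beats the trivial function $0$. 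Coercivity of $I_0$ combined with truncation $\psi \mapsto |\psi|$ (using $F_0 \equiv 0$ on $(-\infty, 0]$) ensures that the infimum of $I_0$ is attained by a non-negative function, which by the uniqueness of the positive critical point must coincide with $\phi$.

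Next, introduce a parameter $\lambda \ge 0$ by modifying the nonlinearity strictly above the range of $\phi$, exactly as in Proposition \ref{pro_findf}. Take $\chi \in C^\infty$ with $\chi(s) = 0$ on $(-\infty, M+1]$, $s - (M+2) \le \chi(s) \le s - (M+1)$ and $\chi'(s) > 0$ for $s > M+1$, and set
\[
F_\lambda(s) := F_0(s) + \chi(s)^3 - \lambda\, \chi(s)^4, \qquad f_\lambda := F_\lambda'.
\]
Because the perturbation vanishes on $[0, \max \phi]$, $\phi$ remains a critical point of $I_\lambda$ with $I_\lambda(\phi) = \mathcal{E}_0$ for every $\lambda \ge 0$. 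Setting $m_\lambda := \inf_{\mcH_{0,0}(0,1)} I_\lambda$, Steps 2--4 of Proposition \ref{pro_findf} now transfer almost verbatim: a test function with a tall bump above $M+2$ shows that $\mathcal{E} := \{\lambda > 0 : m_\lambda < \mathcal{E}_0\}$ is nonempty, while the quartic damping combined with the Poincar\'e inequality on the superlevel sets $\{\psi > M+1\}$ bounds $\mathcal{E}$ from above. At $\lambda^* := \sup \mathcal{E}$, weak lower semicontinuity of $I_{\lambda^*}$ together with the non-degeneracy of $\phi$ as a local minimum of $I_0$ produces a second global minimizer $\varphi \ne \phi$ with $\max \varphi > M+1$; then $\varphi > \phi$ in $(0,1)$ follows from the strong maximum principle applied to $\varphi - \phi$ and the uniqueness of solutions to the initial-value problem. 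Finally, Step 5 of Proposition \ref{pro_findf} carries over verbatim to show that the set of global minimizers of $I_{\lambda^*}$ is totally ordered, so selecting the minimal minimizer strictly above $\phi$ yields the desired $\varphi$, and Assumption \ref{two_ends} holds with $c = 0$.

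The main obstacle is Step 1: engineering $f_0$ so that $\phi$ is simultaneously the unique positive critical point of $I_0$ and a non-degenerate global minimizer with $\mathcal{E}_0 < 0$. Uniqueness is what rules out a competing minimizer appearing between $\phi$ and $\varphi$ during the continuation, while $\mathcal{E}_0 < 0$ provides the non-trivial threshold that separates $\phi$ from the trivial function and ensures $\lambda^* < \infty$. Both properties should follow from a careful choice of the amplitude and profile of the bump $f_0$, via a phase-plane shooting argument combined with the coercivity and Poincar\'e-type estimates already developed in the proof of Proposition \ref{pro_findf}.
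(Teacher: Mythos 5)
Your proposal follows the correct general scheme (a one-parameter family of potentials, a continuation in $\lambda$, compactness at the threshold $\lambda^*$, and selection of a minimal minimizer above $\phi$), but it diverges from the paper in the crucial Step 1, and that divergence opens a genuine gap.

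The paper does not construct a bump nonlinearity $f_0$ supported in $[\delta,M]$ and then try to verify uniqueness, nondegeneracy, and global minimality of some positive solution $\phi$. Instead it adds the linear term $-2\psi$ to the potential, so that the Euler--Lagrange equation becomes $-\psi''=2+f_\lambda(\psi)$, and the base solution is the \emph{explicit} parabola $\phi(t)=t(1-t)$. Since $\max\phi=1/4<1$ and $f_\lambda$ vanishes on $(-\infty,1]$, all the properties you flag as the ``main obstacle'' become immediate: $\phi$ is a critical point for every $\lambda$; $\hat I_\lambda(\phi)=-1/6$ is computable; the second variation reduces to $\int_0^1|\xi'|^2\,dt$ because $f_\lambda'(\phi)\equiv 0$, so $\phi$ is automatically a nondegenerate local minimum; and the analogue of Lemma~\ref{lemaphi} gives at once that any other solution lies strictly above $\phi$, because near any touching point the equation linearizes to $-w''=0$ (the nonlinearity is inactive there).

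In your version, $f_0$ is active on the range of $\phi$, and none of the required properties (i)--(iii) come for free. Uniqueness of the positive critical point amounts to proving monotonicity of the time map $E\mapsto T(E)=\int_0^{\phi_{\max}(E)}\frac{ds}{\sqrt{2(E-F_0(s))}}$, which is delicate and not guaranteed for an arbitrary bump; the second variation of $I_0$ at $\phi$ is $\int(|\xi'|^2 - f_0'(\phi)\xi^2)$, which can have negative directions when $f_0'(\phi)>0$, so nondegeneracy is a genuine constraint, not a byproduct; and your claim that $\varphi>\phi$ ``follows from the strong maximum principle'' is unjustified, since $\varphi-\phi$ solves $-w''+q(x)w=0$ with $q$ generically nonzero on the range of $\phi$, and such solutions can change sign. (That last point can be rescued via the total-ordering of global minimizers from Step~5, provided you first establish that $\phi$ remains a global minimizer at $\lambda^*$, but the first two issues remain.) You acknowledge that Step~1 is the ``main obstacle,'' but the sketch of ``phase-plane calibration'' does not amount to a construction, whereas the paper simply designs the equation so that the obstacle never arises.

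So: not wrong in spirit, but the proposal has a real hole exactly where it departs from the paper's key trick, and the ``harder'' route you chose would require nontrivial additional work (time-map monotonicity, nondegeneracy estimates) that is neither carried out nor clearly feasible for a generic bump $f_0$.
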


The arguments here are very similar to those of the previous section, so we will be sketchy at some points.
	
	\begin{proof}[Proof of Proposition \ref{pro-stable}]
	The proof is divided into five steps.

{\bf Stpe 1.}	
The same notations of $\chi(t)$, $F_{\lambda}(t)$, and $f_{\lambda}(t)$ are used as in the previous section. 
	We now work in the function space $H_{0}^1(0,1)$, where we define our functional: \[
	\hat{I}_\lambda(\psi)=\int_{0}^{1}\left(\frac12|\psi'(t)|^2 - 2 \psi(t)- F_{\lambda}(\psi(t))\right)\,dt,
	\]
	
	It is clear that critical points of $I_\lambda$ correspond to solutions of the Euler-Lagrange equation:
	
	\begin{eqnarray}\label{ode-tris}
		\left\{\begin{aligned}
			&-\psi''=2+f_{\lambda}(\psi), & {\rm in}\ (0,1),\\
			&\psi(0)=0, \psi(1)=0.
		\end{aligned}\right.
	\end{eqnarray} Observe that $\phi(t)=t(1-t)$ is a solution of \eqref{ode-tris} for any $\lambda \geq 0$ and it corresponds to a nondegenerate local minimum of $\hat{I}_{\lambda}$: indeed, for any $\xi \in H_0^1(0,1)$,
	$$ \hat{I}_{\lambda}''(\phi)[\xi, \xi]= \int_0^1 \xi'(t)^2 \, dt \geq c \| \xi\|_{H_0^1}^2.$$
	
	We have the following lemma which is analogous to Lemma \ref{lemaphi}.
	
\begin{lemma} \label{lemmaphi2} 
If $\psi\neq \phi$ is another solution of \eqref{ode-tris} for some $\lambda>0$, then $\psi(t) > \phi(t) $ for any $t \in (0,1)$.
Moreover, $\psi$ is symmetric about $t=\frac12$ and $\psi'>0$ on $[0,\frac12).$ 
\end{lemma}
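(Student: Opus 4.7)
The proof mirrors Lemma~\ref{lemaphi} and rests on two structural facts: the reference solution $\phi(t)=t(1-t)$ takes values in $[0,1/4]$, so $\chi(\phi)\equiv 0$ and hence $f_\lambda(\phi)\equiv 0$; and \eqref{ode-tris} is autonomous, so any critical point of a solution induces local symmetry via IVP uniqueness. The plan is to first establish the strict ordering $\psi>\phi$, and then read off both the symmetry about $1/2$ and the monotonicity on $[0,1/2)$ from a single reflection principle applied at critical points of $\psi$.

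For Part~(a), set $w=\psi-\phi$, so that $-w''=f_\lambda(\psi)$ with $w(0)=w(1)=0$. Suppose $w$ attains a strictly negative minimum at some $t_0\in(0,1)$. Then $\psi(t_0)<\phi(t_0)\leq 1/4<1$, so on the connected component $U$ of $\{t\in[0,1]:\psi(t)<1\}$ containing $t_0$ one has $f_\lambda(\psi)\equiv 0$, hence $w''=0$ on $U$, i.e., $w$ is affine. Since $w'(t_0)=0$, $w$ is constant $=w(t_0)<0$ throughout $U$. At any endpoint of $U$, however, either $t\in\{0,1\}$ with $w(t)=0$, or $\psi(t)=1$ with $w(t)=1-\phi(t)\geq 3/4>0$; in either case continuity at that endpoint is violated. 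Hence $\psi\geq\phi$, and an interior touching point $\psi(t_0)=\phi(t_0)<1$ would force $\psi'(t_0)=\phi'(t_0)$ while both $\psi$ and $\phi$ locally solve $-\psi''=2$, so ODE uniqueness would yield $\psi\equiv\phi$, contrary to hypothesis.

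Parts~(b) and~(c) both follow from the reflection principle: whenever $\psi'(t_0)=0$, the two functions $s\mapsto\psi(t_0+s)$ and $s\mapsto\psi(t_0-s)$ solve the same IVP, so $\psi$ is symmetric about $t_0$ on the maximal interval where both sides make sense. For~(b), pick $t^\ast\in(0,1)$ where $\psi$ attains its maximum; taking $s=\min(t^\ast,1-t^\ast)$ yields either $\psi(2t^\ast)=0$ (if $t^\ast\leq 1/2$) or $\psi(2t^\ast-1)=0$ (if $t^\ast\geq 1/2$). By Part~(a), the only admissible choice is $t^\ast=1/2$, giving the symmetry of $\psi$ about $1/2$. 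For~(c), suppose $\psi'(t_0)=0$ for some $t_0\in(0,1/2)$; the same principle with $s=t_0$ gives $\psi(2t_0)=\psi(0)=0$ with $2t_0\in(0,1)$, again contradicting Part~(a). Moreover $\psi'(0)>0$: if instead $\psi(0)=\psi'(0)=0$, then $\psi''(0)=-(2+f_\lambda(0))=-2$ would force $\psi<0$ on $(0,\delta)$, contradicting Part~(a). Combining $\psi'(0)>0$ with the absence of interior zeros of $\psi'$ in $(0,1/2)$ and continuity yields $\psi'>0$ on $[0,1/2)$.

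The principal obstacle is Part~(a): one must propagate the local rigidity $w''=0$ across an arbitrary component of $\{\psi<1\}$ and exploit its boundary behavior to preclude a negative interior minimum of $w$. Once (a) is in place, the autonomous structure of \eqref{ode-tris} delivers both (b) and (c) from the same elementary reflection argument, with Part~(a) supplying the positivity needed to extract the final contradictions.
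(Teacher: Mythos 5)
Your proof is correct and follows essentially the same route as the paper: Part (a) adapts the argument of Lemma~\ref{lemaphi} (the paper omits it as ``analogous''), and Parts (b) and (c) use the reflection principle for the autonomous ODE via IVP uniqueness, exactly as the paper does. The only cosmetic difference is that you reflect about the maximum point $t^*$, whereas the paper reflects about the \emph{first} critical point $t_0$, but both are instances of the same reflection argument and yield the identical conclusion $t^*=t_0=1/2$.
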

	
\begin{proof}	

The proof of the first part of this lemma is also analogous to that of Lemma \ref{lemaphi} and will be omitted. In particular, $\psi>0$ in $(0,1)$ and $\psi'(0)>1$. For the second part, let us denote by $t_0>0$ the smallest point so that  $\psi'(t_0)=0$. By definition, $\psi$ is increasing in $(0, t_0)$. Observe now that the functions:
$$ \psi(t), \ \tilde{\psi}(t)=\psi(2 t_0-t),$$ 
are solutions to the same ODE, and $\psi(t_0)= \tilde{\psi}(t_0)$, $\psi'(t_0)= \tilde{\psi}'(t_0)=0$. By uniqueness of the initial value problem, we conclude that $\psi= \tilde{\psi}$. This implies that $\psi$ is positive and decreasing in the interval $(t_0, 2t_0)$, and $\psi(2t_0)=0$, which implies that $t_0=1/2$.
\end{proof}

	 Define
	\[
	\hat{m}_\lambda=\inf_{\psi\in H_{0}(0,1)} \hat{I}_\lambda(\psi).
	\]
	One easily sees that $\hat{m}_\lambda\le -1/6$ for all $\lambda\ge0$ since $I_\lambda(\phi)= -1/6$.

{\bf Step 2.} 
For every $\lambda>0$, we have $-\infty<\hat{m}_\lambda\le -1/6$, and $\hat{m}_\lambda$ is achieved. Moreover, $\hat{m}_\lambda\le \hat{m}_{\lambda'}$ for $\lambda\le \lambda'$.

The proof can be easily adapted from the previous section.

\medskip 

{\bf Step 3.} 
The set $\mathcal{E}=\{\lambda>0: \hat{m}_\lambda< -1/6 \}$ is not empty and bounded from above, so we may denote by $\hat{\lambda}^*$ its supremum.

We evaluate the functional $\hat{I}_0$ on $\mu \phi$, for large $\mu>0$:
\begin{align*}
\hat{I}_0(\mu \phi) =& \int_{0}^{1}\left(\frac {\mu^2}{2}|\phi'(t)|^2 - 2 \mu \phi(t)- \chi(\mu \phi(t))^3\right)\,dt \\
\leq& \int_{0}^{1}\left(\frac {\mu^2}{2}|\phi'(t)|^2 - 2 \mu \phi(t)- (\mu \phi(t)-2)^3\right)\,dt.    
\end{align*}
Hence, for sufficiently large $\mu$ we conclude that $\hat{I}_0(\mu \phi) < -2$. Fixed such value $\mu$, we can take $\lambda$ small enough so that $\hat{I}_\lambda(\mu \phi) < -1$, and hence $\mathcal{E}$ is not empty.

\medskip 

We now show that for $\lambda$ sufficiently large, $m_{\lambda}= -1/6$ and is achieved only at $\phi$. We reason by contradiction, and assume that there exists another minimizer $\psi_{\lambda} \neq \phi$ so that $\hat{I}_{\lambda}(\psi_{\lambda}) \leq -1/6$. In particular, $\psi_{\lambda}$ is a solution of \eqref{ode-tris}. By Lemma \ref{lemmaphi2}, $\psi_{\lambda}$ attains a maximum $\gamma_{\lambda}>1$, which must be attained at $p=1/2$. 

We now claim that $\chi (\psi_{\lambda}) \to 0$ uniformly. Since $\chi$ is increasing, it suffices to show that $\chi(\gamma_{\lambda}) \to 0$. 

Recall that $\psi_{\lambda}$ has a maximum at $1/2$, so that:
\begin{equation}\label{psilambda2ndd}
0 \leq - \psi_{\lambda}''(1/2) = 2 + \chi'(\gamma_{\lambda}) \left( 3 \chi(\gamma_{\lambda})^2 - 4 \lambda \chi(\gamma_{\lambda})^3\right).
\end{equation}

If $\chi(\gamma_{\lambda}) \nrightarrow 0$ then $\gamma_{\lambda} \nrightarrow 1$ and the right hand side of \eqref{psilambda2ndd} would diverge negatively as $\lambda \to + \infty$, a contradiction that proves the claim.

Therefore, one has
\begin{equation}
	\begin{aligned}
 -1/6 \geq & \hat{I}_{\lambda}(\psi_{\lambda}) = \int_{0}^{1}\left(\frac12|\psi_{\lambda}'(t)|^2 - 2 \psi_{\lambda}(t)- \chi(\psi_{\lambda})^3 + \lambda \chi(\psi_{\lambda})^4 \right) \, dt \\
 \geq &  \int_{0}^{1}\left(\frac12|\psi_{\lambda}'(t)|^2 - 2 \psi_{\lambda}(t)- \chi(\psi_{\lambda})^3 \right) \, dt = 
\int_{0}^{1}\left(\frac12|\psi_{\lambda}'(t)|^2 - 2 \psi_{\lambda}(t) \right) \, dt+ o(1).
\end{aligned}
\end{equation}
 This implies that $\psi_{\lambda}$ is a minimizing sequence of the limit functional:
$$ \tilde{I}(\psi)= \int_{0}^{1}\left(\frac12|\psi'(t)|^2 - 2 \psi(t) \right) \, dt.$$
As a consequence $\psi_{\lambda} \to \phi$ in $H^1(0,1)$, but this is impossible since $\gamma_{\lambda} = \| \psi_{\lambda}\|_{L^{\infty}} >1.$

\medskip 

{\bf Step 4.} 
$\hat{m}_{\hat{\lambda}^*}=-1/6$, and $\hat{I}_{\hat{\lambda}^*}$ has two global minimizers $\phi$ and $\varphi \in H_{0}^1(0,1)$ (see Figure \ref{pic2}).

This proof can be done following Step 3 of the proof of Proposition \ref{pro_findf} in the previous section.

	\begin{figure}[h]
	\centering 
	\begin{minipage}[c]{15cm}
		\centering
		\resizebox{12cm}{9cm}{\includegraphics{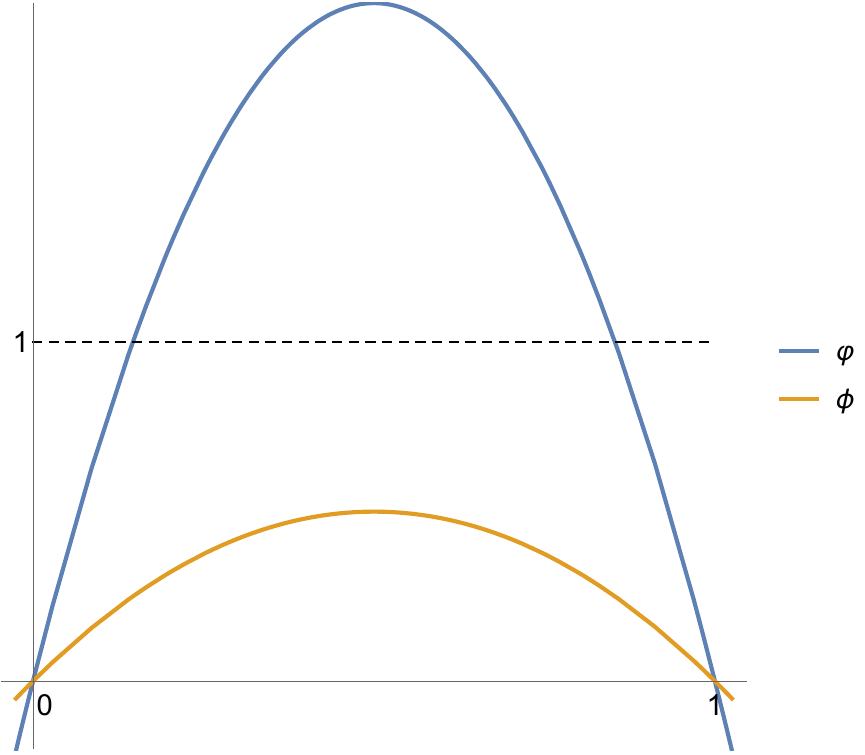}}
		\caption{The graphs of the functions $\phi$ and $\varphi$.}
        \label{pic2}
	\end{minipage}
\end{figure}

\medskip

{\bf Step 5.} For $\lambda= \hat{\lambda}^*$, there exists a $\varphi \in H_{0}^1(0,1)$ which is a minimal minimizer above $\phi$. In other words, the following properties hold.
\begin{enumerate}
	\item[a)] $\varphi > \phi$.
	\item[b)] $\hat{I}_{\hat{\lambda}^*}(\varphi)=-1/6$.
	\item[c)] If $\psi \in H_{0}^1(0,1)$ satisfies $\psi >\phi$ and $\hat{I}_{\hat{\lambda}^*}(\psi)=-1/6$, then $\psi \geq \varphi$.
\end{enumerate} 

The proof for  the existence of $\varphi$ follows the same reasoning as Step 4 in the previous section, and will be skipped.

With this the proof of Proposition \ref{pro-stable} is concluded.
	\end{proof}

	\begin{proof}[Proof of Theorem \ref{theostable}]
		
By applying Proposition \ref{pro_scheme} to the setting given in Proposition \ref{pro-stable}, we find a solution to the problem:

\begin{eqnarray}
	\left\{\begin{aligned}
		&-\Delta u=2+f_{\hat{\lambda}^*}(u), & \text{in  } \Omega_\infty,\\
		&u(x_1,0)=0, & x_1\in \mbR,\\
		&u(x_1,1)=0, & x_1\in \mbR.
	\end{aligned}\right.
\end{eqnarray}

Moreover, $\partial_{x_1}u >0$ in $\Omega_{\infty}$ and 

\begin{align*}
	\lim_{x_1\rightarrow-\infty}u(x_1,x_2)=\phi(x_2),\quad
	\lim_{x_1\rightarrow+\infty}u(x_1,x_2)=\varphi(x_2)
\end{align*}
uniformly in $x_2\in[0,1].$

Take now $\alpha \in (0, \| \phi \|_{L^{\infty}}]$. Observe that $u(x) > \| \phi \|_{L^{\infty}}$ on the line $L=\R \times \{1/2\}$ by monotonicity, and hence the superlevel set $U^\alpha=\{x \in \Omega_{\infty}: \ u(x)>\alpha\}$ contains $L$.

We now claim that if $U^{\alpha}$ is convex, then it is a strip in the form $U^{\alpha}= \R \times (a,b)$, with $a <1/2 < b$. Indeed, define
$$a = \inf \{ x_2 \in (0,1), \mbox{ such that } (x_1, x_2) \in U^{\alpha} \mbox{ for some } x_1 \in \R \},$$
$$ b = \sup \{ x_2 \in (0,1), \mbox{ such that } (x_1, x_2) \in U^{\alpha} \mbox{ for some } x_1 \in \R \}.$$

Clearly, $a<1/2<b$ and $U^{\alpha} \subset \R \times (a,b)$.  Take now $\e >0$ arbitrarily small, and fix $(x_1, x_2) \in U^{\alpha}$ such that $ x_2 \in (b- \e, b)$. By convexity, $U^{\alpha}$ contains all segments $[(x_1, x_2), (y_1, 1/2)]$ for all $y_1 \in \R$. As a consequence, 
$$ U^{\alpha} \supset \R \times [1/2, b-\e].$$

Since $\e>0$ is arbitrary, we conclude that $U^{\alpha} \supset \R \times [1/2, b)$. Reasoning analogously, we obtain $U^{\alpha} \supset \R \times (a, 1/2]$. In sum, we obtain $U^{\alpha} = \R \times (a,b)$. Since $\alpha>0$ we also get $0<a$, $b<1$.

Taking limits as $x_1 \to \pm \infty$ we conclude that $\phi(a)= \varphi(a)$ and $\phi(b) = \varphi(b)$, a contradiction that shows that $U^{\alpha}$ is not convex.
Hence the proof of Theorem \ref{theostable} is completed.
\end{proof}

\medskip
	
{\bf Acknowledgements.} The research of Gui is supported by University of Macau research grants  CPG2024-00016-FST, CPG2025-00032-FST, SRG2023-00011-FST, MYRG-GRG2023-00139-FST-UMDF, UMDF Professorial Fellowship of Mathematics, Macao SAR FDCT 0003/2023/RIA1 and Macao SAR FDCT 0024/2023/RIB1.  The research of Ruiz has been supported by
	the Grant PID2021-122122NB-I00 of the MICIN/AEI, the \emph{IMAG-Maria de Maeztu} Excellence Grant CEX2020-001105-M funded by MICIN/AEI, and the Research Group FQM-116 funded by J. Andaluc\'\i a. The research of Xie is partially supported by NSFC grants 12250710674 and 12426203, and Program of Shanghai Academic Research Leader 22XD1421400.

\bibliographystyle{plain}

\begin{thebibliography}{10}
	
	 \bibitem{abg} S. Alama, L. Bronsard, C. Gui, \emph{Stationary layered solutions in $\mathbb R^2$ for an Allen-Cahn systems with multiple-well
potentials}, Calc. Var. Partial Differential Equations,  \textbf{5} (1997), 359--390.


\bibitem{A19}
	C.~O. Alves, \emph{Existence of a heteroclinic solution for a double
		well potential equation in an infinite cylinder of {$\Bbb R^N$}}, Adv.
	Nonlinear Stud. \textbf{19} (2019), no.~1, 133--147. 
   

	\bibitem{BNV_CPAM_1994}
	H.~Berestycki, L.~Nirenberg, and S.~R.~S. Varadhan, \emph{The principal
		eigenvalue and maximum principle for second-order elliptic operators in
		general domains}, Comm. Pure Appl. Math. \textbf{47} (1994), no.~1, 47--92.
		
	\bibitem{BCN_ASNSP_1997}
	H. Berestycki, L. Caffarelli, and L. Nirenberg,  \emph{Further qualitative properties for elliptic equations in
unbounded domains},  Ann. Scuola Norm. Sup. Pisa Cl. Sci. \textbf{25}  (1997), no.~4, 69--94.
	
    \bibitem{bbg} M. T. Barlow, R. F. Bass and C. Gui, \emph{The Liouville property and a conjecture of De Giorgi}, Comm. Pure Appl. Math., \textbf{53} (2000), 1007-1038. 

 
	\bibitem{BL76}
	H.~J. Brascamp and E.~H. Lieb, \emph{On extensions of the
		{B}runn-{M}inkowski and {P}r\'ekopa-{L}eindler theorems, including
		inequalities for log concave functions, and with an application to the
		diffusion equation}, J. Functional Analysis \textbf{22} (1976), no.~4,
	366--389.

\bibitem{Brezis99}
    H.~R. Brezis, Is there failure of the inverse function theorem?, in {\it Morse theory, minimax theory and their applications to nonlinear differential equations}, 23--33, New Stud. Adv. Math., 1, Int. Press, Somerville, MA.
	
	\bibitem{BMR16}
	J. Byeon, P. Montecchiari, and P.~H. Rabinowitz, \emph{A double well
		potential system}, Anal. PDE, \textbf{9} (2016), no.~7, 1737--1772.

        \bibitem{CC98}
	X. Cabr\'e{} and S. Chanillo, Stable solutions of semilinear elliptic problems in convex domains, Selecta Math. (N.S.), {\bf 4} (1998), no.~1, 1--10.

\bibitem{Cao}{D. Cao, B. Fan, and W. Zhan, }{Free boundary problems for the two-dimensional Euler equations in exterior domains,  }{arXiv:2406.16134, 2024.}

\bibitem{CastroLear1}
{\'A}. Castro and D. Lear, 
Time periodic solutions for the 2D Euler equation near Taylor-Couette flow, Calc. Var. Partial Differential Equations {\bf 63} (2024), no.~9, Paper No. 224, 89 pp.



\bibitem{CastroLear2}
{\'A}. Castro and D. Lear,
\emph{Traveling waves near {P}oiseuille flow for the 2d {E}uler equation},
arXiv:2404.19034, 2024.

\bibitem{CJS}
K. Choi, I. Jeong, and Y. Sim, \emph{On Existence of Sadovskii Vortex Patch: A Touching Pair of Symmetric Counter-Rotating Uniform Vortices},  Ann. PDE, {\bf 11} (2025), 18.
    \bibitem{constantin2021flexibility}
P Constantin, T.~D Drivas, and D. Ginsberg,
\emph{Flexibility and rigidity in steady fluid motion},
 {Comm. Math. Phys.}, \textbf{385} (2021), 521--563.


\bibitem{coti2023stationary}
M. Coti~Zelati, T.~M Elgindi, and K. Widmayer,
\emph{Stationary structures near the {K}olmogorov and {P}oiseuille flows in the 2d {E}uler equations},
Arch. Ration. Mech. Anal., \textbf{247} (2023), 12.


\bibitem{DR_arxiv_2024}
F. De~Regibus and D. Ruiz, 
Monotone heteroclinic solutions to semilinear PDEs in cylinders and applications, 
Calc. Var. Partial Differential Equations {\bf 64} (2025), no.~4, Paper No. 111, 18 pp.

\bibitem{drivas2023islands}
T.~D. Drivas and D. Ginsberg, 
Islands in stable fluid equilibria, Proc. Amer. Math. Soc. {\bf 152} (2024), no.~11, 4855--4863

\bibitem{drivas-nualart}{T. D. Drivas and M. Nualart, }{A geometric characterization of steady laminar flow, }{arXiv:2410.18946, 2024.}


\bibitem{EHSX}
T. M. Elgindi, Y. Huang, A. R. Said, and C. Xie, 
\emph{A classification theorem for steady Euler flows},
arXiv:2408.14662, 2024.

\bibitem{EFR}{A. Enciso, A. J. Fern\'{a}ndez, and D. Ruiz, }{Smooth nonradial stationary Euler flows on the plane with compact support, }{arXiv:2406.04414, 2024.}


\bibitem{gomez2021symmetry}
J. G{\'o}mez-Serrano, J. Park, J. Shi, and Y. Yao.
\emph{Symmetry in stationary and uniformly rotating solutions of active scalar equations},
Duke Math. J., \textbf{170} (2021), 2957--3038.

    
	\bibitem{G_JFA_2008}
	C. Gui, Hamiltonian identities for elliptic partial differential equations, J. Funct. Anal. {\bf 254} (2008), no.~4, 904--933.

	
	\bibitem{GXX_arxiv_2024}
	C. Gui, C. Xie, and H. Xu, \emph{On a classification of steady solutions to two-dimensional Euler equations}, arXiv:2405.15327, 2024.

\bibitem{hamel2017shear}
F. Hamel and N. Nadirashvili,
\emph{Shear flows of an ideal fluid and elliptic equations in unbounded domains}, {Comm. Pure Appl. Math.}, \textbf{70} (2017), 590--608.

\bibitem{hamel2019liouville}
F. Hamel and N. Nadirashvili,
\emph{A {L}iouville theorem for the {E}uler equations in the plane},
{Arch. Ration. Mech. Anal.}, \textbf{233} (2019), 599--642.

\bibitem{hamel2021circular}
F. Hamel and N.~S. Nadirashvili, \emph{ Circular flows for the Euler equations in two-dimensional annular domains, and related free boundary problems}, J. Eur. Math. Soc. (JEMS) {\bf 25} (2023), no.~1, 323--368.



    
	\bibitem{HNS16}
	F. Hamel, N. Nadirashvili, and Y. Sire, \emph{Convexity of
		level sets for elliptic problems in convex domains or convex rings: two
		counterexamples}, Amer. J. Math. \textbf{138} (2016), no.~2, 499--527.

	
	\bibitem{K85}
	G. Keady, \emph{The power concavity of solutions of some semilinear elliptic
		boundary value problems}, Bull. Austral. Math. Soc. \textbf{31} (1985),
	no.~2, 181--184.
	
	\bibitem{Ki82}
	K. Kirchg\"{a}ssner, \emph{Wave-solutions of reversible systems and
		applications}, J. Differential Equations \textbf{45} (1982), no.~1, 113--127.
	

\bibitem{Li2022Poiseuille}
C. Li, Y. Lv, H. Shahgholian, and C. Xie.
\emph{Analysis on the steady {E}uler flows with stagnation points in an  infinitely long nozzle},
 arXiv:2203.08375, 2022.



\bibitem{lin2011inviscid}
Z. Lin and C. Zeng.
\emph{Inviscid dynamical structures near {C}ouette flow},
{Arch. Ration. Mech. Anal.}, \textbf{200} (2011), 1075--1097.


    
	
	\bibitem{ML71}
	L.~G. Makar-Limanov, \emph{The solution of the {D}irichlet problem for the
		equation {$\Delta u=-1$}\ in a convex region}, Mat. Zametki \textbf{9}
	(1971), 89--92. 

\bibitem{Lions}
P. L. Lions, \emph{Two geometrical properties of solutions of semilinear problems}, Applicable Anal., \textbf{12} (1981), 267--272.

    \bibitem{nualart2023zonal}
M. Nualart.
\emph{On zonal steady solutions to the 2d {E}uler equations on the rotating unit sphere},
\newblock {Nonlinearity}, \textbf{36} (2023), 4981--5006.
	
	\bibitem{Ra94}
	P.~H. Rabinowitz, \emph{Solutions of heteroclinic type for some classes of
		semilinear elliptic partial differential equations}, J. Math. Sci. Univ.
	Tokyo \textbf{1} (1994), no.~3, 525--550. 
    
	\bibitem{Ra02}
		P.~H. Rabinowitz, \emph{Spatially heteroclinic solutions for a semilinear elliptic
		{P}.{D}.{E}.}, ESAIM Control Optim. Calc. Var. \textbf{8} (2002), 915--931.
	
	
	\bibitem{Ra04}
P.~H. Rabinowitz, \emph{A new variational characterization of spatially heteroclinic
		solutions of a semilinear elliptic {PDE}}, Discrete Contin. Dyn. Syst.
	\textbf{10} (2004), no.~1-2, 507--515.
	
	\bibitem{RuizARMA}{D. Ruiz, } \emph{Symmetry results for compactly supported steady solutions of the 2D Euler equations, }{Arch. Ration. Mech. Anal., 247(3): Paper No. 40, 25, 2023.}
\bibitem{TianCui}
{Z. Tian and K. Cui}, 
   \emph{A study of streamline geometries in subsonic and
supersonic regions of compressible flow fields}, J. Fluid Mech., {\bf 987} (2024),  A26.
\end{thebibliography}

\end{document}